\documentclass[11pt, a4paper]{amsart}

\usepackage{amsmath, amsthm, amssymb,fullpage}

\usepackage{color}
\usepackage{xcolor}

\usepackage{enumitem}
\usepackage{hyperref}

\theoremstyle{plain}
\newtheorem{theorem}{Theorem}[section]
\newtheorem{corollary}[theorem]{Corollary}
\newtheorem{lemma}[theorem]{Lemma}
\newtheorem{proposition}[theorem]{Proposition}

\theoremstyle{definition}
\newtheorem{definition}[theorem]{Definition}

\theoremstyle{remark}
\newtheorem{remark}[theorem]{Remark}

\numberwithin{equation}{section}

\newcommand{\C}{\mathbb C}
\newcommand{\R}{\mathbb R}

\newcommand{\Cc}{\mathcal C}

\usepackage{bbold}

\usepackage{tikz-cd}

\DeclareMathOperator{\dist}{dist}

\usepackage{stmaryrd}

\usepackage{enumitem}
\usepackage{hyperref}

\usepackage{mathrsfs}

\title[Exponential mixing of all orders and CLT for automorphisms of compact K\"ahler manifolds]{Exponential mixing of all orders and CLT for automorphisms of compact K\"ahler manifolds}

\begin{author}[F.~Bianchi]{Fabrizio Bianchi}
\address{ 
CNRS, Univ. Lille, UMR 8524 - Laboratoire Paul Painlev\'e, F-59000 Lille, France}
  \email{fabrizio.bianchi$@$univ-lille.fr}
\end{author}

\begin{author}[T.C.~Dinh]{Tien-Cuong Dinh}
\address{National University of Singapore, Lower Kent Ridge Road 10,
Singapore 119076, Singapore}
\email{matdtc$@$nus.edu.sg }
\end{author}

\subjclass[2010]{37F80 % Higher-dimensional holomorphic and meromorphic dynamics
%, 37D35 % Thermodynamic formalism, variational principles, equilibrium states for dynamical systems
(primary), 
32U05,  %% Plurisubharmonic functions and generalizations
32H50, %%	Iteration of holomorphic maps, fixed points of holomorphic maps and related problems for several complex variables
37A25, %% Ergodicity, mixing, rates of mixing
60F05 %% Central limit and other weak theorems
 (secondary)}

\keywords{Automorphism,
Exponential Mixing of all orders,
Central Limit Theorem}

\begin{document} 

\hyphenpenalty=10000

\maketitle

\begin{abstract}
We consider the unique measure of maximal entropy of an automorphism of
a compact K\"ahler manifold with simple action on cohomology. We show that it is
exponentially mixing of all orders with respect to H\"{o}lder observables. It follows that the Central Limit Theorem
(CLT) holds for these observables. 
 In particular,
our result applies to all automorphisms of
compact K\"ahler surfaces with positive entropy.
\end{abstract}

\bigskip

\noindent
{\bf Notation.} 
The pairing $\langle\cdot, \cdot\rangle$
 is used for the integral of a function with respect to a measure or more
generally the value of a current at a test form.
 By
 $(p,p)$-currents 
 we mean currents
 of bi-degree $(p,p)$.
The notations $\lesssim$ and $\gtrsim$ 
stand for inequalities up to
a multiplicative constant.
If $R$ and $S$
are two real currents of the same bi-degree, we write $|R|\leq S$ when
$S\pm R\geq 0$.
 Observe that this forces $S$ to be positive.

Given a compact K{\"a}hler manifold $X$ of dimension $k$, for every $0\leq q \leq k$ 
we will denote by $\mathscr D_q(X)$ (resp. $\mathscr D_q^0 (X)$)
the real space
generated by positive
closed (resp. $dd^c$-exact) 
$(q,q)$-currents on $X$. For $S\in \mathscr D_q(X)$,
 we will denote by $\{S\}$ the cohomology class of $S$ in $H^{q,q}(X,\R)$.

\section{Introduction}

Let $(X,\omega)$ be a compact K\"ahler manifold
of dimension $k$
and $f$
a holomorphic automorphism of $X$. 
We refer to \cite{BK09,FT23,McM06,McM07,Oguiso09,OT15} 
for interesting examples of such maps, and to
 \cite{DTD12,Dinh05JGA,DS05, DS10JAG}
for their general properties, see also
\cite{Cantat01,CD20,DH22,FT21,Guedj10,Oguiso14}.
We denote by $f^n$ the
iterate of order $n$ of $f$.
For $0\leq q \leq k$,
the dynamical degree $d_q$
is defined as the spectral radius
of the pull-back operator acting on the cohomology group
$H^{q,q}(X,\R)$. We have $d_0=d_k=1$ and $d_q (f^n)= d_q (f)^n$
for all $n\in \mathbb N$.

By a 
fundamental result of
 Khovanskii \cite{Khovanskii79}, Teissier \cite{Teissier79}, and Gromov 
\cite{Gromov90},
the sequence $q\mapsto \log d_q$
is concave, see also \cite{DN06}.
This implies that there exist integers $0\leq p\leq p'\leq k$ such that
\[
1=d_0 < d_1 < \ldots < d_p = \ldots = d_{p'} > \ldots > d_{k}=1.
\]
We say that $f$ has \emph{simple action
on cohomology} 
if
$p=p'$
 and if moreover the action of $f^*$
on $H^{p,p}(X,\R)$
 admits a unique eigenvalue of maximal
 modulus. 
 Such eigenvalue is then necessarily equal to $d_p$.
 We denote in this case 
 by 
 $\delta=\delta(f)$ the maximum between $\max_{q\neq p} d_q$ and the 
moduli
 of 
  the other eigenvalues for the action of $f^*$ on
  $H^{p,p} (X,\R)$.
  We call $d_p$ the \emph{main dynamical degree} 
  and $\delta$ the \emph{auxiliary dynamical degree}
  of $f$.

 \medskip
 
From now on, we assume that
 $f$ has simple action on cohomology.
 It admits a unique probability
 measure of maximal entropy $\mu$, which
  is the intersection
of 
a 
 positive closed $(p,p)$-current $T_+$
 and a 
positive closed 
$(k-p,k-p)$-current $T_-$
(the main Green currents of $f$),
 see \cite{DS05,DS10JAG}.
Such measure is also called the \emph{equilibrium measure} of $f$, and is mixing
and hyperbolic.
It was shown
 in \cite{DS10}
that such measure is exponentially mixing
for H{\"o}lder observables, 
see also
\cite{Dinh05,Vigny15,Wu22}.
 We consider here the following stronger property.

\begin{definition}\label{d:exp-mixing-all-orders}
Let $\nu$ be 
an invariant measure 
and  $(E,\|\cdot\|_E)$ a normed space
of real functions on $X$, with $\|\cdot\|_{L^r (\nu)}\lesssim \|\cdot\|_E$ for all $1\leq r <\infty$.
We say that
 $\nu$ is \emph{exponentially mixing of all orders} 
for observables in $E$ 
 if 
for all $\kappa \in \mathbb N^*$ there exist
constants
$C_{\kappa}>0$
and
 $0< \theta_{\kappa} < 1$
  such that,  for all observables
 $g_0, \dots, g_{\kappa}\in E$
 and 
 integers
 $0=: n_0 \leq n_1 \leq \dots \leq n_{\kappa}$, 
 we have
\[
\Big|
\langle \nu,
 g_0 ( g_1 \circ  f^{n_1} )
\ldots
 (g_{\kappa} \circ f^{n_{\kappa}})
\rangle
- \prod_{j=0}^{\kappa} \langle  \nu, g_j
\rangle
\Big|
\leq C_{\kappa}
\cdot \Big(
\prod_{j=0}^{\kappa} 
\|g_j\|_{E}
\Big) \cdot
\theta_{\kappa}^{\min_{0\leq j \leq \kappa-1} ( n_{j+1}- n_j)}.
\] 
\end{definition}

The exponential mixing of all orders is one of the strongest ergodic
properties for a dynamical system.
It implies a number of statistical properties that seem unattainable without such quantitative control, see for instance
 \cite{BG, DFL}.
It is a main open question if this is implied by the exponential mixing of order 1, see for instance \cite[Question 1.5]{DKRH}.
 We established in \cite{BD23} the exponential mixing of all orders for every complex H\'enon map.
The following is the
main result of this paper.

\begin{theorem}\label{t:intro-mixing}
Let $f$ be a holomorphic automorphism of a compact K\"ahler
manifold
$(X,\omega)$
with simple action on cohomology.
Let $d_p$ be its 
main
 dynamical degree
 and 
  $\delta$ 
  its auxiliary dynamical degree.
 Then, for every $\delta<\delta'<d_p$ and  $0<\gamma\leq 2$,
 the equilibrium probability measure
$\mu$ of $f$ is exponentially mixing of all orders $\kappa \in \mathbb N^*$
for all
$\Cc^\gamma$
 observables,
 with $\theta_\kappa= (d_p/\delta')^{-(\gamma/2)^{\kappa+1} /2}$,
 see Definition \ref{d:exp-mixing-all-orders}.
\end{theorem}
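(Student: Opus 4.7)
The plan is to proceed by induction on $\kappa$. The base case $\kappa=1$ is the exponential mixing for H\"older observables established in \cite{DS10}. For the inductive step, fix observables $g_0,\dots,g_\kappa$ and times $0=n_0\le n_1\le\dots\le n_\kappa$, let $m:=\min_j(n_{j+1}-n_j)$ be attained at some index $j^\ast$, and split the product at this position. Using the $f$-invariance of $\mu$ and a change of variable by $f^{n_{j^\ast}}$, the correlation becomes
\[
\langle \mu,\, \Phi\cdot(\Psi\circ f^m)\rangle,\qquad
\Phi:=\prod_{j\le j^\ast} g_j\circ f^{n_j-n_{j^\ast}},\quad \Psi:=\prod_{j>j^\ast}g_j\circ f^{n_j-n_{j^\ast+1}}.
\]
Thus the task reduces to a two-point correlation with gap $m$, but where $\Phi,\Psi$ are products of $\Cc^\gamma$ functions composed with possibly large iterates of $f$: they are uniformly bounded in $L^\infty$ by $\prod_j\|g_j\|_{L^\infty}$ but have no controlled $\Cc^\gamma$ norm.

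\textbf{Regularization and the main estimate.} One cannot feed $\Phi,\Psi$ directly into first-order mixing. Instead, one regularizes both at a scale $\epsilon$ (using a heat kernel or convolution in local charts), obtaining smooth $\Phi_\epsilon,\Psi_\epsilon$ with $\|\Phi-\Phi_\epsilon\|_{L^\infty}+\|\Psi-\Psi_\epsilon\|_{L^\infty}\lesssim \epsilon^\gamma\prod_j\|g_j\|_{\Cc^\gamma}$ and $\Cc^2$ norms polynomial in $\epsilon^{-1}$. The smooth part $\langle\mu,\Phi_\epsilon(\Psi_\epsilon\circ f^m)\rangle$ is attacked with the current-theoretic machinery of Dinh--Sibony: decompose $\Psi_\epsilon\, T_-$ into its cohomology class plus a $dd^c$-exact $(k-p,k-p)$-current $R$ with controlled DSH-type norm, then use the contraction of $(f^m)^\ast$ on $dd^c$-exact currents supplied by the simple cohomological action. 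This yields a decay factor $(\delta'/d_p)^m$ times an $\epsilon^{-A}$ loss coming from the smoothing. The remainder $\langle\mu,(\Phi-\Phi_\epsilon)(\Psi\circ f^m)\rangle$ (and its counterpart) is reorganized as a $(\kappa-1)$-fold correlation of H\"older observables --- with the extra factor $\Phi-\Phi_\epsilon$ absorbed either into $g_{j^\ast}$ or $g_{j^\ast+1}$ --- and bounded by the inductive hypothesis at order $\kappa-1$, paying only an extra $\epsilon^\gamma$. Optimizing $\epsilon$ in $m$ balances the two terms and gives at each step $\theta_\kappa\sim\theta_{\kappa-1}^{\gamma/2}$; iterating from the base rate $\sim (d_p/\delta')^{-1/2}$ produces the exponent $(\gamma/2)^{\kappa+1}/2$ claimed in the statement.

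\textbf{Main obstacle.} The decisive difficulty is that the wild products $\Phi,\Psi$ must never be estimated through a H\"older norm (which blows up in $n_\kappa$); one must split them into $L^\infty$ data and data carried by positive closed, or $dd^c$-exact, currents of the correct bidegree. This forces one to run the Dinh--Sibony regularization of forms into sums of positive closed currents and exact remainders in bidegrees $(p,p)$ and $(k-p,k-p)$ simultaneously, and to use the hypothesis that $d_p$ is a simple maximal eigenvalue of $f^\ast$ on $H^{p,p}(X,\R)$ strictly separated from the rest of the spectrum by $\delta'$, so that the contraction $\|(f^m)^\ast R\|\lesssim (\delta'/d_p)^m\|R\|$ holds for exact $R$. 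The second obstacle is combinatorial: when absorbing $\Phi-\Phi_\epsilon$ into the adjacent observable $g_{j^\ast}$, one must check that the resulting function still carries a H\"older norm of the right shape so that the induction closes. This essentially transplants, into the general K\"ahler/automorphism setting with simple action on cohomology, the scheme developed in \cite{BD23} for complex H\'enon maps, with the Green currents of the H\'enon case replaced by the main Green currents $T_\pm$ of Dinh--Sibony.
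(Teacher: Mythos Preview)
There is a genuine gap at the regularization step. Standard mollification gives $\|\Phi-\Phi_\epsilon\|_{L^\infty}\lesssim \epsilon^\gamma[\Phi]_{\Cc^\gamma}$, and $[\Phi]_{\Cc^\gamma}$ is not of order $\prod_j\|g_j\|_{\Cc^\gamma}$: since $\Phi$ involves compositions $g_j\circ f^{n_j-n_{j^\ast}}$ with iterates of size up to $n_{j^\ast}$, its H\"older seminorm grows like $\mathrm{Lip}(f)^{\gamma n_{j^\ast}}$. If instead you regularize each $g_j$ separately and then compose, the $L^\infty$ error is indeed small, but $\|\Phi_\epsilon\|_{\Cc^2}$ is still exponential in $n_{j^\ast}$, not ``polynomial in $\epsilon^{-1}$''. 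Either way the loss in your main estimate is exponential in $n_{j^\ast}$, which can be arbitrarily large compared with $m$, so optimizing $\epsilon$ cannot close the induction. The remainder step has the same defect: $\Phi-\Phi_\epsilon$ is not a H\"older function of controlled norm that can be absorbed into $g_{j^\ast}$. You correctly flag in your obstacle paragraph that one must never estimate $\Phi,\Psi$ through a H\"older norm, but both halves of your scheme do exactly that; the phrase ``controlled DSH-type norm'' hides the real issue, namely why $dd^c\Psi_\epsilon\wedge T_\pm$ should have mass and super-potential bounds independent of the $n_j$'s.

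The paper avoids $\epsilon$-regularization altogether. It first reduces to $\gamma=2$ by interpolation (this is the sole origin of the exponent $(\gamma/2)^{\kappa+1}$, one factor per observable; for $\gamma=2$ the rate does not degrade with $\kappa$), then passes to $\mathbb X=X\times X$ with $F=(f,f^{-1})$ and writes the correlation as a pairing of $d_p^{-n_1}(F^{n_1/2})_*[\Delta]-\mathbb T_-$ against $\Psi\,\mathbb T_+$, where $\Psi(z,w)=g_0(w)\prod_{j\ge1}g_j(f^{n_j-n_1}(z))$ carries all the iterates. The crux is to show that $dd^c\Psi\wedge\mathbb T_+$ has a H\"older continuous super-potential with constants \emph{uniform in the $n_j$'s}. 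This is achieved by the comparison principle of \cite{DNV18} (Proposition~\ref{p:holder}(ii)): one dominates $|dd^c\Psi\wedge\mathbb T_+|$ by a finite sum $\sum_j d_p^{-\ell_j}(F^{\ell_j})^*(\omega\wedge\mathbb T_+)$ (Lemma~\ref{l:new-norm-star}), and a separate argument (Proposition~\ref{p:sequence-holder}, using the gap between the main and the next dynamical degree together with a bootstrap via Corollary~\ref{c:cm} applied to the inverse map) shows that each term has a uniformly H\"older super-potential. That uniformity is precisely the missing idea in your outline.
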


In order to prove Theorem \ref{t:intro-mixing} we rely on delicate estimates
coming from pluripotential theory and on the theory of positive closed currents.
We partially follow the strategy in \cite{BD23}, but
the absence of plurisubharmonic functions on compact K{\"a}hler
manifolds
makes it not possible to employ a
key step
developed there.
Instead, we rely on the theory of \emph{super-potentials}
for positive closed currents,
and on quantitative estimates on the convergence of sufficiently regular positive closed
currents towards the Green currents. We will
 give below an overview of our strategy.

\medskip

As in 
\cite{BD23},
the following is 
a consequence of Theorem \ref{t:intro-mixing}
and \cite{BG}.
 Recall that 
 $u$ \emph{satisfies the Central Limit Theorem}
(CLT)
with variance $\sigma^2 \geq 0$ with respect to $\mu$
if
 $n^{-1/2} (S_n (u)  - n\langle\mu,u\rangle )\to \mathcal N(0,\sigma^2)$ in law,
 where $\mathcal N (0,\sigma^2)$
denotes the
(possibly degenerate, for $\sigma =0$)
 Gaussian distribution 
with mean 0 and variance $\sigma^2$, i.e., 
 for any interval $I \subset \mathbb R$ we have
\[
\lim_{n\to \infty} \nu \Big\{
\frac{S_n (u) - n\langle\mu,u\rangle }{\sqrt{n} } \in I
\Big\}
=
\begin{cases}
1 \mbox{ when } I \mbox{ is of the form } I=(-\delta,\delta)  & \mbox{ if } \sigma^2=0,\\
\frac{1}{\sqrt{2\pi\sigma^2}}\displaystyle\int_I e^{-t^2 / (2\sigma^2)} dt &  \mbox{ if }
\sigma^2 >0.
\end{cases} \]

\begin{corollary}\label{t:intro-clt-manifold}
Let $f$ be a holomorphic automorphism
of a compact K\"ahler
manifold
$(X,\omega)$
with simple action on cohomology.
Then all H\"older
observables $u \colon X\to \R$
satisfy the Central Limit Theorem
with respect to the measure of maximal entropy $\mu$
of $f$ with
\[\sigma^2
=
\sum_{n\in \mathbb Z} \langle\mu, \tilde u (\tilde u \circ f^n) \rangle
=
\lim_{n\to \infty}
\frac{1}{n}\int_{X}
(\tilde u+ \tilde u \circ f + \ldots + \tilde u \circ f^{n-1})^2 d\mu,
\]
where $\tilde u := u-\langle\mu,u\rangle$.
\end{corollary}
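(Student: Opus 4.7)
The plan is to apply the abstract CLT for dynamical systems satisfying exponential mixing of all orders established in \cite{BG}, using Theorem \ref{t:intro-mixing} as the sole nontrivial input. Setting $\tilde u := u - \langle \mu, u\rangle$ reduces the problem to a centered H\"older observable of the same exponent, since $S_n(u) - n\langle \mu, u\rangle = S_n(\tilde u)$; so one may as well assume $\langle\mu,u\rangle = 0$.

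First I would verify that the space $E = \Cc^\gamma(X)$ of real H\"older functions, equipped with its usual norm, fulfills the hypothesis $\|\cdot\|_{L^r(\mu)} \lesssim \|\cdot\|_E$ of Definition \ref{d:exp-mixing-all-orders}; this is immediate from the continuous inclusion $\Cc^\gamma \hookrightarrow L^\infty$. Theorem \ref{t:intro-mixing} then yields, for every $\kappa \in \mathbb N^*$, the multi-correlation decay required by the abstract CLT of \cite{BG}, which directly produces the convergence in distribution $n^{-1/2} S_n(\tilde u) \to \mathcal N(0, \sigma^2)$ with
\[
\sigma^2 = \sum_{n \in \mathbb Z} \langle \mu, \tilde u (\tilde u \circ f^n)\rangle.
\]
Absolute convergence of this series follows from the case $\kappa = 1$ of Theorem \ref{t:intro-mixing} applied with $g_0 = g_1 = \tilde u$, which gives $|\langle \mu, \tilde u (\tilde u \circ f^n)\rangle| \lesssim \theta_1^{n} \|\tilde u\|_{\Cc^\gamma}^2$.

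The equality with the Ces\`aro limit is then standard: expanding the square and using $f$-invariance of $\mu$ yields
\[
\frac{1}{n}\int_X \bigl(\tilde u + \tilde u \circ f + \dots + \tilde u \circ f^{n-1}\bigr)^2 d\mu
= \sum_{|k| < n} \Bigl(1 - \frac{|k|}{n}\Bigr) \langle \mu, \tilde u (\tilde u \circ f^{|k|})\rangle,
\]
and the exponential bound above lets one pass to the limit by dominated convergence, matching the value of $\sigma^2$ above. There is no substantive obstacle here: all the hard analytic work lies upstream in Theorem \ref{t:intro-mixing}, and once that is available the corollary is essentially a citation of \cite{BG} together with a routine identification of the variance.
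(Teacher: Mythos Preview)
Your proposal is correct and follows exactly the route indicated in the paper: the corollary is stated there as a direct consequence of Theorem~\ref{t:intro-mixing} and \cite{BG}, without further argument. Your write-up simply fills in the routine verifications (the $L^r$ bound, absolute convergence of the variance series, and the Ces\`aro identification), none of which the paper spells out.
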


Let now $X$ be a compact
K\"ahler surface and 
$f$ 
 an automorphism of positive entropy.
By \cite{Gromov03,Yomdin87}, the topological entropy is equal to $\log d_1$, see also \cite{DS04}.
In particular, $d_1$ is strictly 
larger than $1$.
A result by Cantat 
\cite{Cantat01}
says that all the eigenvalues of 
the action of $f^*$ on $H^{1,1}(X,\R)$ have modulus 1, except
for  two eigenvalues $d_1$ and $1/d_1$, which have multiplicity 1.
In particular, every
automorphism of positive entropy of a K\"ahler surface has simple action on cohomology.

\begin{corollary}\label{c:intro-clt-surface}
Let $f$ be a holomorphic automorphism of positive entropy on
a compact K\"ahler surface $X$.
 Then, for every $1<d'<d_1$ and $0<\gamma\leq 2$,
 the equilibrium probability measure
$\mu$ of $f$ is exponentially mixing of all orders
$\kappa \in\mathbb N^*$
for all
$\Cc^\gamma$
 observables, 
with 
$\theta_\kappa = (d')^{-(\gamma/2)^{\kappa+1} /2}$,
see 
Definition \ref{d:exp-mixing-all-orders}.
Moreover,
 all H\"older
observables satisfy the Central Limit Theorem
with respect to the measure of maximal entropy of $f$.
\end{corollary}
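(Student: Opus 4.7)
The plan is to deduce Corollary \ref{c:intro-clt-surface} directly from Theorem \ref{t:intro-mixing} and Corollary \ref{t:intro-clt-manifold} by invoking Cantat's structural result on automorphisms of compact K\"ahler surfaces with positive entropy, which is already recalled in the paragraph preceding the corollary. All of the substantive work has been done in Theorem \ref{t:intro-mixing}; here the task is only to identify the relevant dynamical degrees and check that the simple action hypothesis is automatic in the surface setting.

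First I would verify that $f$ has simple action on cohomology. Since $X$ has dimension $k=2$, the dynamical degrees are $d_0=1$, $d_1$, $d_2=1$. Positive entropy together with the Gromov--Yomdin theorem gives $d_1>1$, so the concavity of $q\mapsto \log d_q$ forces $p=p'=1$ in the notation of the introduction. By Cantat's theorem, the action of $f^*$ on $H^{1,1}(X,\R)$ has exactly two eigenvalues $d_1$ and $1/d_1$ outside the unit circle, each simple, with all other eigenvalues of modulus $1$. In particular $d_1$ is the unique eigenvalue of maximal modulus, so $f$ has simple action with main dynamical degree $d_p=d_1$. The auxiliary degree is then
\[
\delta=\max\Big(\max_{q\neq 1} d_q,\ \text{moduli of the remaining eigenvalues on } H^{1,1}\Big)=1,
\]
since $d_0=d_2=1$, $1/d_1<1$, and the other eigenvalues on $H^{1,1}$ lie on the unit circle.

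With these identifications, Theorem \ref{t:intro-mixing} applies and gives, for every $1=\delta<\delta'<d_1$ and every $0<\gamma\leq 2$, exponential mixing of all orders for $\Cc^\gamma$ observables with rate $(d_1/\delta')^{-(\gamma/2)^{\kappa+1}/2}$. To match the statement of the corollary, I reparametrize by setting $d':=d_1/\delta'$; as $\delta'$ ranges over $(1,d_1)$, so does $d'$, and the rate becomes $\theta_\kappa=(d')^{-(\gamma/2)^{\kappa+1}/2}$, as claimed.

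Finally, the CLT part is immediate from Corollary \ref{t:intro-clt-manifold} applied in this setting, since the simple action hypothesis has just been verified. No genuine obstacle arises in this deduction: the entire analytic content is carried by Theorem \ref{t:intro-mixing}, Corollary \ref{t:intro-clt-manifold}, and Cantat's structure theorem, so the argument is essentially a bookkeeping exercise on dynamical degrees.
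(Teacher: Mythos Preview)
Your proposal is correct and follows exactly the reasoning the paper lays out in the paragraph immediately preceding the corollary: Cantat's result gives simple action on cohomology with $p=1$ and $\delta=1$, after which Theorem \ref{t:intro-mixing} and Corollary \ref{t:intro-clt-manifold} apply directly, with the reparametrization $d'=d_1/\delta'$ yielding the stated rate. The paper does not give a separate proof beyond that explanatory paragraph, so your write-up is essentially a faithful expansion of the intended argument.
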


\subsection*{Strategy of the proof of Theorem \ref{t:intro-mixing}}

Using the classical
theory of interpolation \cite{Triebel95}, it is enough to prove 
the theorem in the case where $\gamma=2$. 
Consider the compact K{\"a}hler manifold $\mathbb X:= X\times X$. The automorphism
$F:= (f,f^{-1})$ of $\mathbb X$
and
 its inverse
  have simple action on cohomology, with the largest dynamical degree being that of order $k$, which is
equal to $d^2_p$. We can define the main Green currents $\mathbb T_+$ and $\mathbb T_-$
for $F$ as $\mathbb T_+ := T_+ \otimes T_-$ and $\mathbb T_- := T_-\otimes T_+$. They satisfy
$(F^n)^* (\mathbb T_+) = d^2_p \mathbb T_+$
and $(F^n)_* (\mathbb T_-) = d^2_p \mathbb T_-$. Proving the exponential mixing of order $\kappa$
for the $\kappa+1$ observables
$g_0, \dots, g_\kappa$ can be reduced to proving
the estimate
\begin{equation}\label{e:goal-mixing}
|\langle d_p^{-n_1} (F^{n_1/2})_* [\Delta] - \mathbb T_- ,  \Theta_{\{g_j\}, \{n_j\}} \rangle|
\lesssim  \prod_{j=0}^\kappa \|g_j\|_{\Cc^2}
 d^{- \min_{0 \leq j \leq \kappa-1} (n_{j+1}- n_j)/2},
\end{equation}
where  $[\Delta]$ denotes the current of integration on the diagonal $\Delta \subset X\times X$,
$(z,w)$ the coordinates on $X\times X$, we set
\[
\Theta_{\{g_j\}, \{n_j\}} := g_0 (w) g_1 (z) (g_2 \circ f^{n_2-n_1}(z)) \ldots (g_\kappa \circ f^{n_\kappa- n_1}(z)) \mathbb T_+,
\]
and we assumed for simplicity that 
$n_1$ is even.

\medskip

In the case of Hénon maps, we established in \cite{BD23}
the above convergence
by proving that $\Theta_{\{g_j\}, \{n_j\}}$
can be replaced by suitable currents
$\Theta^{\pm}$ with $dd^c \Theta^{\pm}\geq 0$, for which the estimate above can be proved
thanks to the properties of plurisubharmonic functions. As non-trivial 
plurisubharmonic functions
 do not exist
on compact K{\"a}hler manifolds, that approach cannot work here. Instead, we use more
refined estimates on the regularity of the currents involved. 
We prove that 
if $\Theta_{\{g_j\}, \{n_j\}}$
 is \emph{H{\"o}lder continuous} in a precise sense (i.e., when seen as a function on the 
space of
positive exact $(k,k)$-currents,
endowed with a suitable metric), then the convergence \eqref{e:goal-mixing} holds.
This is done by exploiting the theory of \emph{super-potentials}
 for positive closed currents, as developed
by Sibony and the second author.

\medskip

The main task becomes to prove the H{\"o}lder continuity  of the current $\Theta_{\{g_j\}, \{n_j\}}$.
Observe that the estimate needs to be uniform in
the $n_j$'s and in the $g_j$'s (assuming $\|g_j\|_{\Cc^2}\leq 1$ for all $j$), 
in order for the implicit constant in \eqref{e:goal-mixing} not to depend on such parameters.
Observe also that this problem does not exist when just proving the mixing of order $\kappa=1$, see \cite{DS10}. 
This is the main technical point of the current paper.

\medskip

By means of a general comparison principle for the
super-potentials of positive closed currents \cite{DNV18}, 
we show that it is enough to find a positive closed current $\Xi$
with a H{\"o}lder continuous super-potential and such that 
\[
|dd^c \Theta_{\{g_j\}, \{n_j\}}|\leq \Xi
\quad 
\mbox{ for all } n_j 
\mbox{ and all }
g_j \mbox{ with } \|g_j\|_{\Cc^2} \leq 1.
\
\]
Finding such $\Xi$
and establishing such an estimate
 rely on the 
  gap between $d_p$ and
 the 
 auxiliary dynamical degree of $f$
  and 
on H{\"o}lder estimates for the action on  $f^*$ on $(p+1, p+1)$-currents,
that we also develop in this paper.

\subsection*{Acknowledgments}
We would like to thank the National University of Singapore, the Institut
de Math{\'e}matiques de Jussieu-Paris Rive Gauche, IMPAN, 
Xiaonan Ma, and Feliks Przytycki 
for the warm welcome and
the excellent work conditions.

This project has received funding from
 the French government through the Programme
 Investissement d'Avenir
 (LabEx CEMPI /ANR-11-LABX-0007-01,
ANR QuaSiDy /ANR-21-CE40-0016,
ANR PADAWAN /ANR-21-CE40-0012-01)
and the NUS
and MOE through the grants
A-0004285-00-00
and 
MOE-T2EP20120-0010.
This work is also partially supported by the Simons Foundation Award No. 663281 granted to the Institute of Mathematics of the Polish Academy of Sciences for the years 2021-2023.

\section{Super-potentials of currents on compact K{\"a}hler manifolds}\label{s:super-potentials}

We fix in this section a $k$-dimensional
compact
K\"ahler manifold $X$ and a K\"ahler form $\omega$ on $X$.
For $0\leq q\leq k$, we denote by $\mathscr D_q(X)$ 
the real space
generated by positive closed $(q,q)$-currents on $X$, and by $\mathscr D^0_q (X)$
the subspace
of $\mathscr D_q(X)$
given by the currents whose cohomology class in $H^{q,q}(X,\R)$
vanishes. By the $\partial \bar \partial$-lemma, this is the set
of currents in $\mathscr D_q(X)$
which are $\partial \bar \partial$-exact.
 Since $X$ is fixed, for simplicity
in this section
we will drop the dependence on $X$ for these spaces and denote them by
$\mathscr D_q$ and $\mathscr D_q^0$.
We set $h_q := \dim H^{q,q}(X,\R)$
and fix a collection $\alpha_q = (\alpha_{q,1}, \dots, \alpha_{q,h_q})$ 
of real smooth $(q,q)$-forms on $X$ such that the family of cohomology classes
$\{\alpha_{q,j}\}$
is a basis for $H^{q,q} (X,\R)$. We also 
choose a collection of smooth real closed $(k-q,k-q)$-forms
 $\check\alpha_q
 =
(\check\alpha_{q,1},
 \dots, \check\alpha_{q,h_q})$
which represent
 a dual basis of $\alpha_q$  in $H^{k-q, k-q}(X,\R)$
 for the 
 Poincaré duality.

\medskip

Recall that the mass of a positive closed current $S\in H^{q,q}(X,\R)$
 is defined as $\|S\|:= \langle S, \omega^{k-q} \rangle$ and it depends only on the cohomology
 class $\{S\}$ of $S$ in $H^{q,q}(X,\R)$.
The norm $\|\cdot\|_*$ is defined for $S\in \mathscr D_q$
as 
\[
\|S\|_* :=\min\{ \|S'\|\colon |S|\leq S'\}.
\]
When $S$ is $dd^c$-exact
the norm $\|S\|_*$ is equivalent to the norm given by
$\min \|S^+\|$, where
the minimum is taken over all decompositions
$S=S^+-S^-$ with $S^{\pm}$
positive closed.
Observe that, for each such decomposition, we have $\|S^+\|= \|S^-\|$
as a consequence of the equality
$\{S^+\}=\{S^-\}$. We say that a subset of $\mathscr D_q$
is $*$-\emph{bounded}
if it is bounded for $\|\cdot\|_*$.

\medskip

\begin{definition}\label{d:star}
We say that a sequence $S_n \in \mathscr D_q$ $*$-\emph{converges to} $S\in\mathscr  D_q$
if $S_n\to S$ in the sense of currents, and $\|S_n\|_*$ is bounded by a constant independent of $n$.
\end{definition}

\begin{remark}
The convergence 
is Definition
\ref{d:star}
defines a topology on $\mathscr D_q$,
which is not the one given by $\|\cdot\|_*$.
We can also 
define
 a topology on $\mathscr D^0_q$
with the same definition.
By
\cite{DS04},
smooth forms are dense in $\mathscr D_q$ and $\mathscr D^0_q$
with respect to  the topology of Definition \ref{d:star}.
\end{remark}

For any $0<l<\infty$, denote by $\|\cdot\|_{\Cc^l}$
the
standard $\Cc^l$ norm on the space of differential forms.
We consider a norm $\|\cdot\|_{\Cc^{-l}}$
and a
 distance $\dist_l$ on $\mathscr D_{q}$
 defined by
 \[
 \|S\|_{\Cc^{-l}}:= \sup_{\|\Phi\|_{\Cc^l}\leq 1}|\langle S,\Phi\rangle|
 \quad \mbox{ and }
 \quad
 \dist_l (S,S')
 := \|S-S'\|_{\Cc^{-l}},
 \]
where the supremum in the first definition
is on smooth 
$(k-q,k-q)$-forms 
$\Phi$ on $X$. Observe that,
by
 interpolation
 \cite{Triebel95},
 for every $0<l<l'<\infty$ and $m>0$,
 we have
\begin{equation}\label{e:holder}
\|S\|_{\Cc^{-l'}}\leq \|S\|_{\Cc^{-l}}\leq c_{l,l',m} \|S\|_{\Cc^{-l'}}^{l/l'}
\quad \mbox{ for all } S  \mbox{ such that }
 \|S\|_* \leq m,
\end{equation}
for some positive constant $c_{l,l',m}$.
In particular, 
 the above inequalities imply that
\[
\dist_{l'}\leq \dist_l\leq c_{l,l',m} ( \dist_{l'})^{l/l'}
\quad \mbox{ on } \quad 
\{S\in \mathscr D_p \colon \|S\|_* \leq m/2\}.
\]

\medskip

We now
 recall the notion of \emph{super-potential} for currents in $\mathscr D_q$, see \cite{DS10JAG}.
 This notion generalizes
that
 of potentials for $(1,1)$-positive closed currents on $X$, which are
 the quasi-plurisubharmonic functions on $X$.
Super-potentials
of a current $S\in \mathscr D_q$
that we use here are
  functions,
 depending linearly on $S$, and defined on a subset of
 $\mathscr D_{k-q+1}^0$
 \footnote{Super-potentials can be defined on
 a subset of $\mathscr D_{k-q+1}$, but it is simpler to use $\mathscr D^0_{k-q+1}$,
 and we will only need this definition in this paper.}.
 \medskip

Take $R\in \mathscr D_{k-q+1}^0$. As $\{R\}=0$, we have $R= dd^c U_R$
for some $(k-q,k-q)$-current $U_R$, that we call a \emph{potential} of $R$.
By adding to $U_R$ a suitable combination of the 
$\check\alpha_{q,j}$'s,
one can assume that
$U_R$ is $\alpha_q$-{\emph normalized}, i.e., that
$\langle U_R,\alpha_{q,j}\rangle=0$ for all $1\leq j\leq h_q$. 
The \emph{$\alpha_q$-normalized super-potential} $\mathcal U_S$ of $S$ is defined as
\begin{equation}\label{e:def-super-potential}
\mathcal U_S (R) :=\langle S,U_R\rangle 
\end{equation}
whenever the RHS of the above
expression is well-defined. This is the case for instance when 
$S$ is smooth
 or when $R$ is smooth and we choose  $U_R$ 
 smooth.

\begin{lemma}\label{l:super-1}
Let $S,R,U_R$ be as above and such that
either $R$ or $S$ is smooth.
 \begin{itemize}
 \item[{\rm (i)}]
  The $\alpha_q$-normalized super-potential
  $\mathcal U_S$ of $S$ does not depend on the choice of the
  $\alpha_q$-normalized 
  potential $U_R$; in particular, it does not depend on the choice of the $\check\alpha_{q,j}$'s;
\item[{\rm (ii)}] 
If $\{S\}=0$, then $\mathcal U_S$ does not depend on the choice of $\alpha_q$.
\item[{\rm (iii)}]
If $S$ is a linear combination of the $\alpha_{q,j}$'s, then the $\alpha_q$-normalized
super-potential $\mathcal U_S$ of $S$ vanishes identically on $\mathscr D^0_{k-q+1}$.
\end{itemize}
\end{lemma}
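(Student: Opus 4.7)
My plan is to reduce the lemma to a bookkeeping check combined with a single Hodge-theoretic fact about $dd^c$-closed currents on compact K\"ahler manifolds. I would treat the three items in the order (iii), (i), (ii), which is increasing in complexity; the hypothesis that either $S$ or $R$ is smooth will be used only to guarantee that the duality pairings below are well-defined.

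Item (iii) is essentially the definition: for $S=\sum_j c_j \alpha_{q,j}$, bilinearity together with the normalization $\langle U_R,\alpha_{q,j}\rangle=0$ gives $\mathcal U_S(R)=\sum_j c_j\langle \alpha_{q,j},U_R\rangle = 0$ for every admissible $R\in\mathscr D^0_{k-q+1}$.

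For (i) I would take two $\alpha_q$-normalized potentials $U_R,U'_R$ of the same $R$ and set $\psi:=U_R-U'_R$. This is a real $(k-q,k-q)$-current with $dd^c\psi=0$ and $\langle\psi,\alpha_{q,j}\rangle=0$ for every $j$. The key input, which I would quote from the Hodge theory of $dd^c$-closed currents on a compact K\"ahler manifold, is the decomposition
\[
\psi = H + \partial\eta + \bar\partial\bar\eta
\]
with $H$ a real harmonic $(k-q,k-q)$-form and $\eta$ a current. Since $S$ is closed, integration by parts makes the two exact pieces pair trivially with $S$, so $\langle S,\psi\rangle=\langle S,H\rangle$. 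Expanding $\{H\}=\sum_i c_i\{\check\alpha_{q,i}\}$ in the basis dual to $\{\alpha_{q,i}\}$, and using once more that each $\alpha_{q,j}$ is closed so that the exact pieces also pair trivially with it, the normalization condition reads $c_j=\langle H,\alpha_{q,j}\rangle=\langle \psi,\alpha_{q,j}\rangle=0$ for every $j$; hence $\{H\}=0$ and, $H$ being harmonic, $H=0$. This yields $\langle S,U_R\rangle=\langle S,U'_R\rangle$, which is the asserted independence of the normalized potential. The \emph{in particular} statement on the $\check\alpha_{q,j}$'s follows at once, because changing them modifies the recipe used to normalize but only produces another $\alpha_q$-normalized potential.

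For (ii) I would apply the same decomposition to the difference of two \emph{arbitrary} (not necessarily normalized) potentials of $R$, obtaining $\langle S,U_R-U'_R\rangle = \langle S,H\rangle$ for some real closed harmonic form $H$. If $\{S\}=0$, write $S=dT$ for a real current $T$; then $\langle S,H\rangle=\pm\langle T,dH\rangle=0$ because $H$ is closed. Therefore $\mathcal U_S(R)$ does not depend on any choice of potential of $R$, and in particular is insensitive to the basis $\alpha_q$ that prescribes the normalization. The only non-routine ingredient in the whole argument is the Hodge-type decomposition of $dd^c$-closed currents, which I regard as the main technical obstacle; it is however a standard consequence of the ellipticity of $\partial\bar\partial$ and the K\"ahler identities and is already used in \cite{DS10JAG}, while everything else is linear algebra combined with the closedness of $S$ and of the $\alpha_{q,j}$'s.
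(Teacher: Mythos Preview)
Your argument is correct and rests on the same Hodge-theoretic content as the paper, but the packaging differs enough to be worth a short comparison.

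For (iii) you and the paper do exactly the same thing.

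For (i), the paper does not write out an explicit Aeppli-type decomposition $\psi=H+\partial\eta+\bar\partial\bar\eta$. Instead it observes, via the $\partial\bar\partial$-lemma and Poincar\'e duality, that for any $dd^c$-closed $\psi$ the pairing $\langle S,\psi\rangle$ depends only on the cohomology classes $\{S\}\in H^{q,q}(X,\R)$ and $\{\psi\}\in H^{k-q,k-q}(X,\R)$. Since the $\{\alpha_{q,j}\}$ span $H^{q,q}(X,\R)$ and the normalization forces $\langle\alpha_{q,j},\psi\rangle=0$ for every $j$, the cup-product $\{S\}\smile\{\psi\}$ vanishes. This is the same mechanism as yours (your conclusion $H=0$ is actually a slightly stronger statement than the paper needs), just phrased cohomologically rather than via a harmonic representative.

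For (ii), the paper takes a noticeably shorter route: writing $S=dd^c U_S$ and integrating by parts gives directly
\[
\langle S,V_R\rangle=\langle dd^c U_S,V_R\rangle=\langle U_S,dd^c V_R\rangle=\langle U_S,R\rangle,
\]
which is manifestly independent of the choice of potential $V_R$ (normalized or not). Your approach, reusing the decomposition and then killing $\langle S,H\rangle$ via $S=dT$, is correct but more roundabout; the paper's identity also yields the useful symmetry $\mathcal U_S(R)=\mathcal U_R(S)$ for free, which is exploited immediately afterwards in Proposition~\ref{p:super-pot-1}(iii).
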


\begin{proof}
(i)
Let $U_R$ and $U'_R$ be two $\alpha_q$-normalized potentials of $R$. As $dd^c (U_R-U'_R)=R-R=0$, 
by Poincaré duality and the $\partial\bar\partial$-lemma the class $\{U_R-U'_R\}\in H^{k-q,k-q}(X,\R)$
is well defined
and $\langle S, U_R-U_{R'}\rangle$
only depends on the cohomology class of $S$.
Since both $U_R$ and $U'_R$ are $\alpha_q$-normalized,
the cup-products
of such class with all the classes $\{\alpha_{q,j}\}$
 are equal to $0$. As the $\{\alpha_{q,j}\}$'s
  form a basis of 
$H^{q,q} (X,\R)$,
it follows that $\langle S,U_R- U'_R\rangle=0$, as required.

\medskip

(ii) As $\{S\}=0$, we have $S= dd^c U_S$ for some $(q-1,q-1)$-current $U_S$. For any potential $V_R$ of
 $R$, non necessarily
$\alpha_q$-normalized, we have
\begin{equation}\label{e:SR1}
\langle S, V_R\rangle =\langle dd^c U_S, V_R\rangle = \langle U_S,  dd^c V_R\rangle = \langle U_S, R\rangle.
\end{equation}
In particular, the first term of the above expression does not depend on
the choice of $V_R$, as this is the case for the last term.
The assertion follows.

\medskip

(iii) 
By definition, we have $\langle U_R, \alpha_{q,j}\rangle=0$ for all $j$. As $S$ is a linear combination of the
$\alpha_{q,j}$'s, we have $\mathcal U_S (R) =\langle S, U_R\rangle=0$. The assertion follows.
\end{proof}

\begin{remark}
Observe that, on the other hand, the value $\langle S,U_R\rangle$ and, by consequence, the $\alpha_q$-normalized
super-potential $\mathcal U_S$ of $S$, depend on
 $\alpha_q$ when $S$ is not exact.
\end{remark}

\begin{definition}
We say that $S\in \mathscr D_q$
has a \emph{continuous super-potential} if
 $\mathcal U_S$ extends continuously to $\mathscr D_{k-q+1}^0$, with respect to the topology given by Definition \ref{d:star}.
 We also use the notation $\mathcal U_S$ for the extended super-potential in this case.
 \end{definition}

\begin{proposition}\label{p:super-pot-1}
Take $q,q'$ with $q+q'\leq k$,
$S\in \mathscr D_q$,
$S'\in \mathscr D_{q'}$,
 and $R \in \mathscr D^0_{k-q+1}$.
\begin{enumerate}
\item[{\rm (i)}] If $S$ is smooth, then it has a continuous $\alpha_q$-normalized super-potential
for every choice of $\alpha_q$;
\item[{\rm (ii)}]
If $S$ has a continuous $\alpha_q$-normalized
super-potential, it also has continuous $\alpha'_q$-normalized
 super-potentials for every other normalization $\alpha'_q =(\alpha'_{q,1}, \dots, \alpha'_{q,h_q})$.
\item[{\rm (iii)}] 
If $S$ 
belongs to  $\mathscr D^0_q$
 and has 
continuous super-potentials
and $R$ is smooth we have
\[\mathcal U_S (R)=\mathcal U_R (S),\]
where $\mathcal U_R$ is the super-potential of $R$
(which is also independent of the choice of the
normalization);
\item[{\rm (iv)}]
If $S'$ has continuous super-potentials, then the current $S\wedge S'$ is well defined and depends continuously
on $S$.
\end{enumerate}
\end{proposition}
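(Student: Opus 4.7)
The plan is to treat the four items sequentially, with (i) and (ii) establishing the continuity of the normalized super-potentials and then feeding into (iii) and (iv). The central tool is a canonical choice of potential for a $dd^c$-exact current, obtained from a Green-type inverse of $dd^c$: this guarantees that $*$-convergent sequences in $\mathscr D^0_{k-q+1}$ lift to uniformly mass-bounded families of potentials that converge weakly.

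For (i), given $R_n\to R$ $*$-convergently in $\mathscr D^0_{k-q+1}$, the Green operator produces potentials $V_{R_n}\to V_R$ weakly. The $\alpha_q$-normalization modifies $V_{R_n}$ by a combination $\sum_j c_{n,j}\check\alpha_{q,j}$ whose coefficients solve the invertible linear system $\langle V_{R_n}-\sum_j c_{n,j}\check\alpha_{q,j},\alpha_{q,i}\rangle=0$; since $\langle V_{R_n},\alpha_{q,i}\rangle\to\langle V_R,\alpha_{q,i}\rangle$ (each $\alpha_{q,i}$ being smooth), the normalized potentials $U_{R_n}$ converge weakly to $U_R$, and pairing against the smooth $S$ yields $\mathcal U_S(R_n)\to\mathcal U_S(R)$. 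Item (ii) is then immediate: the two normalized potentials differ by $\sum_j \mu_j(R)\check\alpha_{q,j}$, where $\mu_j(R)$ solves an invertible linear system with entries $\langle U_R,\alpha'_{q,i}\rangle=\mathcal U^{\alpha_q}_{\alpha'_{q,i}}(R)$, continuous in $R$ by (i); hence $\mathcal U_S^{\alpha'_q}(R)-\mathcal U_S^{\alpha_q}(R)=\sum_j \mu_j(R)\langle S,\check\alpha_{q,j}\rangle$ is continuous in $R$.

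For (iii), Lemma \ref{l:super-1}(ii) removes the ambiguity of normalization on the exact side. When $S$ is smooth, choose smooth potentials $U_S$ and $U_R$ (the latter available since $R$ is smooth and exact) and apply Stokes: $\langle S,U_R\rangle=\langle dd^c U_S,U_R\rangle=\langle U_S,dd^c U_R\rangle=\langle U_S,R\rangle$. In general, approximate $S$ by smooth exact $S_n\to S$ in the $*$-topology (density from the remark following Definition \ref{d:star}). The left-hand side $\langle S_n,U_R\rangle\to\langle S,U_R\rangle=\mathcal U_S(R)$ since $U_R$ is smooth, while the right-hand side $\mathcal U_R(S_n)\to\mathcal U_R(S)$ by the continuity of $\mathcal U_R$ established in (i).

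For (iv), define $S\wedge S':=\lim_n S_n\wedge S'$ along any smooth $*$-convergent approximation $S_n\to S$. Testing against a smooth form $\Phi$ of complementary bi-degree, one checks the identity
\[
\langle S',S_n\wedge\Phi\rangle=\mathcal U_{S'}(S_n\wedge dd^c\Phi)+\sum_j \langle S_n,\Phi\wedge\alpha_{q',j}\rangle\,\langle S',\check\alpha_{q',j}\rangle,
\]
expressing $S_n\wedge\Phi$ as the $\alpha_{q'}$-normalized potential of the exact current $S_n\wedge dd^c\Phi\in\mathscr D^0_{k-q'+1}$ plus the linear correction coming from the normalization (using $dd^c(S_n\wedge\Phi)=S_n\wedge dd^c\Phi$, valid because $S_n$ is closed). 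Writing $dd^c\Phi=(dd^c\Phi+C\omega^{k-q-q'+1})-C\omega^{k-q-q'+1}$ with $C$ large enough to make the first summand positive, $S_n\wedge dd^c\Phi$ is a uniformly mass-bounded difference of positive closed currents $*$-converging to $S\wedge dd^c\Phi$, so continuity of $\mathcal U_{S'}$ yields convergence of the first term; the correction converges by weak convergence of $S_n$ against the smooth forms $\Phi\wedge\alpha_{q',j}$. Running the argument with two different approximations of $S$ shows that the limit depends only on $S$, defining $S\wedge S'$ and its continuous dependence on $S$. The main obstacle is this last step: organizing the bidegree bookkeeping, the positive/negative decomposition of $dd^c\Phi$, and the normalization corrections so that the continuity of $\mathcal U_{S'}$ is applied only to a $*$-bounded family of exact currents in $\mathscr D^0_{k-q'+1}$ is the technical heart of the proposition.
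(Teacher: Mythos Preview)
Your proposal is correct, but it is considerably more hands-on than the paper's own proof, which is extremely terse. For (i) the paper simply says ``clear from the definition,'' and for (iv) it cites \cite[Theorem 3.3.2]{DS10JAG} without further comment; your sketch of (iv) is essentially a reconstruction of that cited argument, so there you are doing strictly more work. The genuine differences are in (ii) and (iii). For (ii), the paper subtracts a smooth closed form from $S$ to reduce to the exact case $\{S\}=0$, where Lemma~\ref{l:super-1}(ii) says the super-potential is \emph{independent} of the normalization, so continuity for one normalization gives it for all; you instead compute the correction term $\sum_j\mu_j(R)\langle S,\check\alpha_{q,j}\rangle$ explicitly and show it is continuous in $R$. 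For (iii), the paper again invokes Lemma~\ref{l:super-1}(ii) and equation~\eqref{e:SR1} directly (the integration by parts $\langle S,U_R\rangle=\langle U_S,R\rangle$ is valid for any $S\in\mathscr D^0_q$ once $R$, and hence $U_R$, is smooth), whereas you first establish the identity for smooth $S$ and then pass to the limit along a smooth $*$-approximation. Both routes are valid; the paper's are shorter because they exploit Lemma~\ref{l:super-1} more aggressively, while yours have the virtue of being self-contained and making the mechanism behind (iv) visible rather than hiding it in a citation.
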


In particular, by the third
 item,
 for every $R\in \mathscr D_{k-q+1}^0$
 we can define
 \[
\mathcal U_R (S) := \mathcal U_S (R). 
 \]
 when $S\in \mathscr D_q^0$ has a continuous super-potential.

\begin{proof}

(i)
This is clear from the definition of the super-potential $\mathcal U_S$.

\medskip

(ii)
By (i) we can add to $S$ a smooth closed form. This does not change the problem.
So, we can assume that $\{S\}=0$. By Lemma \ref{l:super-1}(ii), $\mathcal U_S$ 
is independent of
the normalization. The assertion follows.

\medskip

(iii) Since 
$R$ is smooth,
the RHS in \eqref{e:def-super-potential} is well defined for every 
$S \in \mathscr D^0_p$.
As $\{S\}=0$, by Lemma \ref{l:super-1}(ii), $\mathcal U_S$ does not depend on the normalization 
and \eqref{e:SR1} holds. As the last term of that expression is equal to  $\mathcal U_R (S)$, the assertion follows.

\medskip

(iv)
This is a consequence of \cite[Theorem 3.3.2]{DS10JAG}.
\end{proof}

\begin{definition}\label{d:holder}
Take $S\in \mathscr D_q$.
For
 $l>0$, $0 < \lambda \leq 1$, and $M\geq 0$,
we say that a super-potential
$\mathcal U_S$ of $S$
is $(l,\lambda, M)$-\emph{H\"older continuous}
if it is continuous and we have
\[
|\mathcal U_S (R)|\leq M \|R\|_{\Cc^{-l}}^\lambda
\mbox{ for every } R\in \mathscr D^0_{k-q+1} \mbox{ with }\|R\|_*\leq 1.
\]
\end{definition}

\begin{lemma}\label{l:holder}
Take $S \in \mathscr D_q$ and $R,R'\in \mathscr D^0_{k-q+1}$ with $\|R\|_*\leq 1$ and $\|R'\|_* \leq 1$.
\begin{enumerate}
\item[{\rm (i)}]
If $S$ is smooth, then 
the $\alpha_q$-normalized
super-potential
 $\mathcal U_S$ of $S$
  is $(2,1,M)$-H{\"o}lder continuous 
  with $M \leq c \|S\|_{\Cc^2}$ for some constant $c>0$
  independent of $S$ (but possibly depending on $\alpha_q$).
\item[{\rm (ii)}]
 If $S$ has
  an
$(l, \lambda, M)$-H{\"o}lder continuous  
  $\alpha_q$-normalized
  super-potential $\mathcal U_S$, 
   then
\[|\mathcal U_S (R)- \mathcal U_S (R')|\leq 2^{1-\lambda} M \|R-R'\|^\lambda_{\Cc^{-l}}.\]
In particular, $\mathcal U_S$ can be seen as a H{\"o}lder continuous  function on
$\{R \in \mathscr D^0_{k-q+1}\colon \|R\|_*\leq 1\}$.
  \item[{\rm (iii)}] 
  If $S$ has
  an
$(l, \lambda, M)$-H{\"o}lder continuous  
  $\alpha_q$-normalized
  super-potential $\mathcal U_S$, 
  then it
  also has 
  $(l,\lambda, M')$-H{\"o}lder continuous 
  $\alpha'_q$-normalized
 super-potentials for every 
 choice of $\alpha'_q$ and
 for some $M'$ independent of $S$.
 \end{enumerate}
\end{lemma}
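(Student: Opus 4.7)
All three parts rely on the following simple observation. Since $\{R\}=0$ for $R\in\mathscr D^0_{k-q+1}$, any closed form $\beta$ satisfies $\langle\beta,R\rangle=0$. So if we decompose $S\in\mathscr D_q$ as $S=S_h+dd^c V_S$ with $S_h$ in the span of the $\alpha_{q,j}$'s and $V_S$ a sufficiently regular current, then Lemma~\ref{l:super-1}(iii) gives $\langle S_h,U_R\rangle=0$, and an integration by parts yields $\mathcal U_S(R)=\langle V_S,R\rangle$ (the closed part of $V_S$ being irrelevant against $R$). The problem reduces to bounding a pairing against a smooth form.

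For (i), take $S$ smooth, choose $S_h=\sum_j c_j\alpha_{q,j}$ cohomologous to $S$ (so $|c_j|\lesssim\|S\|_{\Cc^0}$), and invoke the Green operator of $dd^c$ to produce a smooth $V_S$ with $\|V_S\|_{\Cc^2}\lesssim\|S\|_{\Cc^2}$ (standard Schauder/Hodge estimates). The identity above gives $|\mathcal U_S(R)|\leq \|V_S\|_{\Cc^2}\|R\|_{\Cc^{-2}}\lesssim \|S\|_{\Cc^2}\|R\|_{\Cc^{-2}}$ for smooth $R$, and this extends to all $R\in\mathscr D^0_{k-q+1}$ by $*$-density of smooth currents. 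For (ii), use linearity of $\mathcal U_S$ together with the triangle inequality for $\|\cdot\|_*$: the current $R'':=(R-R')/2$ lies in $\mathscr D^0_{k-q+1}$ with $\|R''\|_*\leq 1$, so the hypothesis applied to $R''$ gives $|\mathcal U_S(R'')|\leq 2^{-\lambda}M\|R-R'\|_{\Cc^{-l}}^\lambda$, and multiplying by $2$ produces the factor $2^{1-\lambda}$.

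For (iii), repeat the trick of (i) in both normalizations: write $\mathcal U_S(R)=\langle V_S,R\rangle$ and $\mathcal U'_S(R)=\langle V'_S,R\rangle$ for currents with $dd^c V_S=S-\sum_j c_j\alpha_{q,j}$ and $dd^c V'_S=S-\sum_j c'_j\alpha'_{q,j}$ (the pairing against $R$ being independent of the particular choice). Subtracting,
\[
\mathcal U'_S(R)-\mathcal U_S(R)=\langle V'_S-V_S,R\rangle,
\]
with $dd^c(V'_S-V_S)=\sum_j c_j\alpha_{q,j}-\sum_j c'_j\alpha'_{q,j}$ a smooth $dd^c$-exact form determined by $\{S\}$ and the change-of-basis data; the Green operator provides $V'_S-V_S$ smooth with $\Cc^l$-norm bounded by a constant depending only on $\{S\}$ and the bases. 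Therefore $|\mathcal U'_S(R)-\mathcal U_S(R)|\lesssim \|R\|_{\Cc^{-l}}$, and the desired bound in $\|R\|_{\Cc^{-l}}^\lambda$ follows by using $\|R\|_{\Cc^{-l}}\lesssim\|R\|_*\leq 1$ to write $\|R\|_{\Cc^{-l}}\leq C^{1-\lambda}\|R\|_{\Cc^{-l}}^\lambda$.

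The main technical subtlety is in the final step of (iii), where the constant $M'$ must absorb both $M$ and the change-of-normalization term while preserving the exponent $\lambda$; this requires invoking the degenerate form of the interpolation inequality~\eqref{e:holder} together with the a~priori estimate $\|R\|_{\Cc^{-l}}\lesssim\|R\|_*$, which itself follows from the definition of $\|\cdot\|_*$ and positivity.
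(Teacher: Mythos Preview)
Your argument is correct and follows essentially the same route as the paper's. Parts (i) and (ii) are identical in substance: subtract the harmonic part (whose super-potential vanishes by Lemma~\ref{l:super-1}(iii)), integrate by parts to get $\mathcal U_S(R)=\langle U_S,R\rangle$ with $U_S$ smooth, and bound by $\|U_S\|_{\Cc^2}\|R\|_{\Cc^{-2}}$; for (ii) both apply Definition~\ref{d:holder} to $(R-R')/2$.

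For (iii) the two arguments are equivalent but phrased differently. The paper subtracts a smooth closed form to reduce to $\{S\}=0$, where Lemma~\ref{l:super-1}(ii) makes the two normalized super-potentials coincide; the Hölder constant changes by the contribution of the subtracted smooth form, handled by part (i). You instead compute the difference $\mathcal U'_S(R)-\mathcal U_S(R)=\langle W,R\rangle$ directly, with $W$ a smooth potential of $\sum_j c_j\alpha_{q,j}-\sum_j c'_j\alpha'_{q,j}$. One caution: your intermediate identities $\mathcal U_S(R)=\langle V_S,R\rangle$ and $\mathcal U'_S(R)=\langle V'_S,R\rangle$ are not literally pairings of currents when both $S$ and $R$ are singular (neither $V_S$ nor $R$ is smooth); only the \emph{difference} makes sense, because $V'_S-V_S$ can be chosen smooth. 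The clean way is to establish the identity for smooth $R$ first and then extend by $*$-density, as you already do in (i). Both your argument and the paper's leave the constant $M'$ depending on $\{S\}$ through the coefficients $c_j,c'_j$; this is harmless in all applications, where $\|S\|_*$ is bounded.
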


\begin{proof}
(i)  
By Lemma \ref{l:super-1}(iii), we can add to $S$ a combination of the $\alpha_{q,j}$'s
and assume that $\{S\}=0$. By Lemma \ref{l:super-1}(ii), we have
$\mathcal U_S (R)= \langle U_S,R\rangle$, where $U_S$ is a smooth 
potential of $S$. The assertion is now clear.

\medskip

(ii) As 
$R-R'\in \mathscr D^0_{k-q+1}$ and
$\|R-R'\|_*\leq 2$, the assertion follows from Definition \ref{d:holder}
applied with $(R-R')/2$ instead of $R$.

\medskip

(iii) 
As in Proposition \ref{p:super-pot-1}(ii), 
by (i) we can assume that $\{S\}=0$. The assertion now follows from Lemma \ref{l:super-1}(ii).
\end{proof}

\begin{proposition}\label{p:holder}
Take $q,q'$ with $q+q'\leq k$.
Let $\alpha_q, \alpha_{q'}, \alpha_{q+q'}$
be as above and
take $S,T\in \mathscr D_q$
and $S' \in \mathscr D_{q'}$
with $\|S\|_*\leq 1$, 
$\|T\|_*\leq 1$,
and $\|S'\|_*\leq 1$.
Assume that the
$\alpha_q$-normalized
super-potential
$\mathcal U_S$
of $S$
 is $(l,\lambda,M)$-H\"older
 continuous
 and that
  the
$\alpha_{q'}$-normalized
super-potential
$\mathcal U_{S'}$
of $S'$
 is $(l',\lambda',M')$-H\"older
 continuous.
\begin{enumerate}
\item[{\rm (i)}]  
For every $l_1>0$,
 $\mathcal U_S$ is
$(l_1,\lambda_1,M_1)$-H\"older continuous, for some
$\lambda_1$ and $M_1$ depending on $l$, $l_1$, $\lambda$, $M$, and $\alpha_q$,
 but independent of $S$.
\item[{\rm (ii)}]
If $S,T$ are positive and $T\leq S$, then 
any
$\alpha_q$-normalized
 super-potential 
$\mathcal U_{T}$ of $T$
is 
$(2,\lambda_1,M_1)$-H\"older continuous, for some
$\lambda_1$ and $M_1$ depending on $l$, $\lambda$, $M$,
 and $\alpha_q$,
but independent of $S$ and $T$.
\item[{\rm (iii)}]
$S\wedge S'$ has 
a
$(2,\lambda_1,M_1)$-H{\"o}lder continuous $\alpha_{q+q'}$-normalized
super-potential,
for some
$\lambda_1$ and $M_1$ depending on $l$, $l'$,
$\lambda$, $\lambda'$,
$M$, $M'$,
$\alpha_q$, $\alpha_{q'}$, 
and $\alpha_{q+q'}$,
but independent of $S$ and $S'$.
\end{enumerate}
\end{proposition}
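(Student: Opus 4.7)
For part (i), my plan is a direct application of the interpolation inequality \eqref{e:holder}: when $l_1 \geq l$ we obtain $\|R\|_{\Cc^{-l}} \leq c_{l,l_1,1}\|R\|_{\Cc^{-l_1}}^{l/l_1}$ on $\{\|R\|_*\leq 1\}$, and when $l_1 < l$ the trivial monotonicity $\|R\|_{\Cc^{-l}} \leq \|R\|_{\Cc^{-l_1}}$ suffices. Substituting into $|\mathcal U_S(R)| \leq M\|R\|_{\Cc^{-l}}^\lambda$ yields the H\"older estimate with exponents of the stated form and a constant depending only on $l,l_1,\lambda,M$ and $\alpha_q$ (the latter through the normalization step used in Lemma~\ref{l:holder}).

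For part (ii), my plan is to transfer the H\"older bound from $\mathcal U_S$ to $\mathcal U_T$ via dyadic regularization combined with the positivity comparison $0 \leq T \leq S$; this is the main obstacle, since the na\"ive splitting $\mathcal U_T = \mathcal U_S - \mathcal U_{S-T}$ is circular because we have no independent control on $\mathcal U_{S-T}$. For $R \in \mathscr D^0_{k-q+1}$ with $\|R\|_* \leq 1$, I would approximate $R$ by a sequence of smooth exact currents $R_m$ (using, e.g., the standard Dinh--Sibony regularization) with uniformly bounded $*$-masses, $\|R_m - R_{m-1}\|_{\Cc^{-l}}$ decaying geometrically in $m$, and $\|R_m\|_{\Cc^0}$ growing polynomially. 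Writing
\[
\mathcal U_T(R) = \mathcal U_T(R_0) + \sum_{m\geq 1} \langle T, U_{R_m} - U_{R_{m-1}}\rangle
\]
for $\alpha_q$-normalized smooth potentials $U_{R_m}$, I decompose each $U_{R_m} - U_{R_{m-1}} = \Psi_m^+ - \Psi_m^-$ into pointwise positive and negative parts and use the domination $|\langle T, \Psi_m^\pm\rangle| \leq \langle S, \Psi_m^\pm\rangle$. Each right-hand side is then rewritten, modulo a smooth correction, as a super-potential evaluation of $S$ at the exact current $dd^c \Psi_m^\pm$, and the H\"older hypothesis on $\mathcal U_S$ together with the geometric decay yields a summable series and a $(2,\lambda_1, M_1)$-H\"older estimate for $\mathcal U_T$, with some loss in the exponent.

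For part (iii), my plan is to decompose $S = S_0 + dd^c V_S$ with $S_0$ a smooth representative of $\{S\}$ (a combination of the $\alpha_{q,j}$'s) and $V_S$ an $\alpha_q$-normalized potential, and then to split $\mathcal U_{S\wedge S'}(R) = \langle S\wedge S', U_R\rangle$ using integration by parts as
\[
\mathcal U_{S\wedge S'}(R) = \langle S_0 \wedge S', U_R\rangle + \langle V_S, S' \wedge R\rangle,
\]
for a smooth $\alpha_{q+q'}$-normalized potential $U_R$ of $R$. The first summand reduces, modulo a smooth correction, to a super-potential evaluation of $S'$ at $S_0 \wedge R \in \mathscr D^0_{k-q'+1}$ (which makes sense because $S_0$ is smooth and closed, giving $\{S_0\wedge R\}=0$), and is controlled by the H\"older bound on $\mathcal U_{S'}$ together with $\|S_0\wedge R\|_{\Cc^{-l'}}\lesssim \|R\|_{\Cc^{-l'}}$. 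The second summand identifies, via Proposition~\ref{p:super-pot-1}(iii) and a smooth correction, with a super-potential evaluation of $S-S_0$ at a current built from $S'\wedge R$, which is controlled by the H\"older bound on $\mathcal U_S$ (transferred from $\mathcal U_S$ to $\mathcal U_{S-S_0}$ by Lemma~\ref{l:holder}(i)) combined with part (ii) applied to components dominated by $S$. Combining these estimates and invoking part (i) to normalize all H\"older exponents to $l_1 = 2$ produces the desired $(2,\lambda_1, M_1)$-continuity of the $\alpha_{q+q'}$-normalized super-potential of $S\wedge S'$.
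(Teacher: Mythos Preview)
Your treatment of part (i) is correct and is exactly what the paper does: it is a one-line consequence of the interpolation inequality \eqref{e:holder}.

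For parts (ii) and (iii), however, the paper does \emph{not} give a self-contained argument. It simply invokes \cite[Theorem~1.1]{DNV18} for (ii) and \cite[Proposition~3.4.2]{DS10JAG} for (iii), each combined with part (i). So there is no in-paper proof to compare your sketches to; what you have written are outlines of direct proofs of those cited results.

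Your sketch for (iii) is in the spirit of the construction in \cite{DS10JAG} (splitting off a smooth representative of the class and integrating by parts), and with enough care can be made to work; note however that your appeal to ``part (ii) applied to components dominated by $S$'' is unclear, since in (iii) neither $S$ nor $S'$ is assumed positive, and the normalization corrections you dismiss as ``smooth corrections'' still have to be shown to be H\"older in $R$.

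Your sketch for (ii) has a more serious gap. For a smooth real $(k-q,k-q)$-form $\Phi$ there is no ``pointwise positive / negative part'' that is itself a positive form in the sense needed for the inequality $\langle T,\Psi\rangle\leq\langle S,\Psi\rangle$ (which requires $\Psi\geq 0$ as a $(k-q,k-q)$-form). The natural substitute $\Psi^{+}=\Phi+C\omega^{k-q}$, $\Psi^{-}=C\omega^{k-q}$ with $C\simeq\|\Phi\|_{\Cc^0}$ does give positive forms, but then $\langle S,\Psi^{+}\rangle+\langle S,\Psi^{-}\rangle$ contains the term $2C\langle S,\omega^{k-q}\rangle\simeq\|\Phi\|_{\Cc^0}$, which is governed by the $\Cc^0$-size of the potential increment, not by any negative-order norm of $R_m-R_{m-1}$. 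Making the telescoping sum converge with a H\"older gain therefore requires a delicate balance between the blow-up of $\|U_{R_m}\|_{\Cc^0}$ and the decay of $\|R_m-R_{m-1}\|_{\Cc^{-l}}$, together with a way to exploit the H\"older hypothesis on $\mathcal U_S$ for the \emph{sum} $\Psi^{+}+\Psi^{-}$ and not just the difference. This balance is precisely the content of \cite[Theorem~1.1]{DNV18}, and your sketch does not indicate how it is achieved.
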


\begin{proof}

The first assertion is a consequence of \eqref{e:holder}.
 The second
one
is a consequence of \cite[Theorem 1.1]{DNV18} and the first one.
The third assertion is a consequence of \cite[Proposition 3.4.2]{DS10JAG} and the first one.
\end{proof}

\section{Dynamical Green currents}\label{s:green}

We keep in this section the notations of Section \ref{s:super-potentials}.

\subsection{Convergence towards Green currents}
We fix here 
an automorphism $f$ of $X$ with simple action on cohomology.
We let 
 $p$ be such that $d_p$
 is the largest dynamical
  degree of $f$,
   and let
 $\delta$ be its auxiliary degree.
  We also fix
  $\delta'$ such that $\delta < \delta' < d_p$.
By assumption, the eigenspace associated to
the eigenvalue
 $d_p$ of the action of $f^*$ 
  on $H^{p,p}(X,\R)$ is a real line $H^+$.
 A \emph{Green $(p,p)$-current} $T_+$ of $f$
 is a non-zero positive closed $(p,p)$-current
 invariant under
  $d_p^{-1}f^*$, i.e., satisfying $f^* (T_+) = d_p T_+$. The cohomology class
  $\{T_+\}$ generates  $H^+$.

\begin{lemma}\label{l:T+}
Let $f$, $d_p$, $T_+$, $\alpha_p$
be as above.
\begin{enumerate}
\item[{\rm (i)}]
$T_+$  is the unique positive closed $(p,p)$-current
 in $\{T_+\}$, and it has a
 $(2,\lambda, M)$-H{\"o}lder continuous 
 $\alpha_p$-normalized
 super-potential for some $\lambda$ and $M$.
 \item[{\rm (ii)}]
 For every $S\in \mathscr D_p$ we have
 $d_p^{-n} (f^n)^* S\to sT_+$.
 Here the constant $s$ depends linearly on $\{S\}$. More precisely, $s$ is the constant
 such that that $d_p^{-n} (f^n)^*\{S\}\to s \{T_+\}$.
 \end{enumerate}
\end{lemma}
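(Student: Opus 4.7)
The plan is to prove (i) and (ii) together by constructing $T_+$ as the $*$-limit of iterates of a smooth class representative, deriving its H\"older super-potential from a telescoping estimate exploiting the spectral gap $\delta'<d_p$, and extracting uniqueness at the end.

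\textbf{Construction and H\"older super-potential.} I pick a smooth closed $(p,p)$-form $\Omega$ with $\{\Omega\}=\{T_+\}$ (via Hodge theory) and set $\Omega_n := d_p^{-n}(f^n)^*\Omega$, $\Psi := d_p^{-1}f^*\Omega - \Omega$. Since $\Psi$ is smooth and cohomologous to zero, $\Psi\in\mathscr D_p^0$, and the telescope identity is
\begin{equation*}
\Omega_n = \Omega + \sum_{j=0}^{n-1} d_p^{-j}(f^j)^*\Psi.
\end{equation*}
Lemma \ref{l:holder}(i) gives that $\mathcal U_\Psi$ is $(2,1,M_0)$-H\"older continuous with $M_0\lesssim\|\Psi\|_{\Cc^2}$, and since $\Psi$ is exact, Lemma \ref{l:super-1}(ii) validates the identity $\mathcal U_{(f^j)^*\Psi}(R)=\mathcal U_\Psi((f^j)_*R)$ for $R\in\mathscr D^0_{k-p+1}$. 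Bounding $\|(f^j)_*R\|_{\Cc^{-l}}\lesssim \|f^j\|_{\Cc^l}^l\|R\|_{\Cc^{-l}}$, controlling $\|(f^j)_*R\|_*$ by the mass growth of $f_*$ on $\mathscr D_{k-p+1}$, and applying the interpolation \eqref{e:holder} to absorb the polynomial growth into a smaller H\"older exponent, I aim to obtain
\begin{equation*}
\bigl|\mathcal U_{d_p^{-j}(f^j)^*\Psi}(R)\bigr|\lesssim (\delta'/d_p)^j\|R\|_{\Cc^{-2}}^\lambda
\end{equation*}
for some $0<\lambda\le 1$, valid for $\|R\|_*\le 1$. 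Summing the geometric series, $(\mathcal U_{\Omega_n})$ converges uniformly on $\{\|R\|_*\le 1\}$ to a $(2,\lambda,M)$-H\"older limit; by the uniform mass bound and Proposition \ref{p:super-pot-1}(iv), $\Omega_n$ $*$-converge to a positive closed current $T_+\in\{T_+\}$ with that super-potential, proving the H\"older part of (i) and (ii) for $S=\Omega$.

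\textbf{General $S$ and uniqueness.} For (ii), I split $\{S\}=s\{T_+\}+\eta$ using simple action, with $\eta$ in the complementary $f^*$-invariant subspace of spectral radius $\le\delta$. Picking a smooth representative $\Omega_S$ of $\{S\}$: the $\{T_+\}$-component contributes $sT_+$ in the limit by the previous step, the $\eta$-component dies as $(\delta/d_p)^n$, and the exact remainder $S-\Omega_S\in\mathscr D_p^0$ decays by the same super-potential argument applied to its telescope with $\Psi_S:=d_p^{-1}f^*(S-\Omega_S)-(S-\Omega_S)$. For uniqueness, given another positive closed $T'\in\{T_+\}$, write $T'-T_+=dd^c W$ via the $\partial\bar\partial$-lemma. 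Applying (ii) to $T'$ gives $d_p^{-n}(f^n)^*T'\to T_+$, so $d_p^{-n}(f^n)^*(T_+-T')\to 0$, i.e., $dd^c(d_p^{-n}(f^n)^*W)\to 0$ in the $*$-topology. Combined with the continuity of $\mathcal U_{T_+}$ and the comparison principle for super-potentials of positive closed currents from \cite{DS10JAG,DNV18}, this forces $dd^c W=0$, hence $T'=T_+$.

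\textbf{Main obstacle.} The critical step is the quantitative super-potential bound: the decay factor $d_p^{-j}$ must dominate the polynomial growth of $\|f^j\|_{\Cc^l}$ and the mass growth of $(f^j)_*R$. This is achieved by interpolation \eqref{e:holder}, trading smooth growth against a smaller H\"older exponent---an exchange made possible precisely by the strict gap $\delta'<d_p$.
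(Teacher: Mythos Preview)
The paper proves this lemma by citation: part (i) is \cite[Theorem 4.2.1]{DS10JAG} together with Proposition~\ref{p:holder}(i), and part (ii) then follows immediately from the uniqueness in (i) --- for $S\ge0$ every weak limit of $d_p^{-n}(f^n)^*S$ is positive closed in the class $s\{T_+\}$, hence equals $sT_+$, and one linearizes. Your proposal instead reconstructs the result, and the telescoping construction of $T_+$ together with the super-potential estimate is essentially the argument behind \cite{DS10JAG}, so that core is sound. One terminological correction: $\|f^j\|_{\Cc^l}$ grows \emph{exponentially} in $j$, not polynomially, and $d_p^{-j}$ does not dominate it. What makes the interpolation work is that $\|(f^j)_*R\|_*$ grows only like $(d_{p-1}+\epsilon)^j<d_p^j$; one balances this against the crude $\Cc^{-l}$ bound via a case split on $j$ versus $\log(1/\|R\|_{\Cc^{-2}})$, exactly the mechanism you will see again in Proposition~\ref{p:sequence-holder}.

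The genuine gap is the circularity between (ii) and uniqueness. You assert that the exact remainder $S-\Omega_S\in\mathscr D_p^0$ ``decays by the same super-potential argument applied to its telescope with $\Psi_S$'', but $\Psi_S=d_p^{-1}f^*(S-\Omega_S)-(S-\Omega_S)$ is not smooth, so Lemma~\ref{l:holder}(i) gives no H\"older control on $\mathcal U_{\Psi_S}$ and the telescope bound does not go through. Your uniqueness argument then invokes (ii) for the non-smooth current $T'$, closing a loop. The fix is to reverse the order, as the paper does: once $T_+$ is built with a continuous (indeed H\"older) super-potential, uniqueness in $\{T_+\}$ follows directly (this is part of \cite[Theorem 4.2.1]{DS10JAG}); then (ii) for arbitrary $S=S^+-S^-\in\mathscr D_p$ is a one-line consequence of uniqueness, since any weak limit of $d_p^{-n}(f^n)^*S^{\pm}$ is positive closed in $s\{T_+\}$. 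A minor related point: your $\Omega\in\{T_+\}$ need not be positive (the class lies on the boundary of the pseudoeffective cone), so the positivity of $\lim\Omega_n$ is not automatic; starting from $\omega^p$ and normalizing avoids this.
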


\begin{proof}
(i)
The first assertion is a consequence of
 \cite[Theorem 4.2.1]{DS10JAG} and Proposition \ref{p:holder}(i).

\medskip

(ii) 
As $f$ has simple action on cohomology, there exists a constant $s$
such that
$d_p^{-n}  (f^n)^* \{S\}\to s \{T_+\}$.
It is clear that the constant $s$ 
 depends linearly on $S$.
 When $S\geq 0$ the statement follows from (i). The general case follows by linearity.
\end{proof}

\medskip

As the inverse $f^{-1}$
of any automorphism with simple action on cohomology satisfies the same property,
the above
 also holds for the automorphism $f^{-1}$.
Since $(f^{-1})^* = f_*$, by Poincaré duality, we have $d_q (f)= d_{k-q}(f^{-1})$ for all $0\leq q\leq k$. Hence,
 the main dynamical degree of $f^{-1}$ is 
 equal to the one of $f$ and is the
 dynamical degree 
 of order $k-p$
 of $f^{-1}$.
The eigenspace associated to this eigenvalue for the action of $f_*$ on $H^{k-p,k-p} (X,\R)$
is a real line $H^-$. A \emph{Green $(k-p,k-p)$-current} $T_-$ of $f$
 is a non-zero positive closed $(k-p,k-p)$-current
 invariant under  $d_p^{-1}f_*$, i.e., satisfying $f_*  (T_-) = d_p T_-$.
 The cohomology class
  $\{T_-\}$ generates  $H^-$,
 $T_-$  is the unique positive closed $(k-p,k-p)$-current
 in $\{T_-\}$, and we have 
 $d_p^{-n} (f^n)_* S\to sT_-$
 for every $S\in \mathscr D_{k-p}$, where the constant $s$ depends linearly on $\{S\}$.

\medskip

Note that the currents $T_+$ and $T_-$ are unique up to multiplicative constants. We choose $T_+$ and $T_-$
such that the positive measure $T_+\wedge T_-$ is of mass 1. This is then the unique measure of maximal entropy
of $f$, see \cite{DS10JAG} for details.

\medskip

  The following result gives a quantitative
  description of the   convergences above, 
  see \cite[Proposition 3.1]{DS10}.
  Note that the independence of the constant
  $c$ from $S$ is given in \cite[Proposition 2.1]{DS10}.

\begin{proposition}\label{prop:cm}
Let $f$, $d_p$, $\delta'$
 be as above
 and $S$ be a current in $\mathscr D_{p}$ with $\|S\|_*\leq 1$.
Let $s$ be the constant such that
$d^{-n}_p (f^n)^* (S)$ converge to $sT_+$.
 Let $R$ be a current
in $\mathscr D^0_{k-p+1}$
with 
$\|R\|_* \leq 1$ 
and whose
$\alpha_{k-p+1}$-normalized
super-potential $\mathcal U_R$ 
is $(2,\lambda, 1)$-H\"older continuous for some $\lambda >0$.
Let 
$\mathcal U_{T_+}$ and
 $\mathcal U_n$ 
be the $\alpha_{p}$-normalized super-potentials of $T_+$
and $d^{-n}_p (f^n)^*(S)$, respectively. 
Then
\[|\mathcal U_n(R) - s\mathcal U_{T_+} (R)| \leq c (d_p/\delta')^{-n},\]
where $c > 0$ 
is a constant independent of $R$, $S$, $s$, and 
$n$.
\end{proposition}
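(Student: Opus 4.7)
The plan is to reduce the estimate to a quantitative decay for an exact residual current, then apply the Hölder hypothesis on $\mathcal U_R$; the argument is essentially a repackaging of \cite[Propositions 2.1 and 3.1]{DS10} in the notation of Section \ref{s:super-potentials}.

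By linearity of the super-potential in its first argument,
\[
\mathcal U_n(R)-s\mathcal U_{T_+}(R)=\mathcal U_{\Omega_n}(R),\qquad \Omega_n:=d_p^{-n}(f^n)^*S-sT_+,
\]
and the currents $\Omega_n$ are $\ast$-bounded uniformly in $n$ and in $S$ with $\|S\|_*\leq 1$. By the definition of $s$ and the simple-action hypothesis, the class $\{\Omega_n\}$ lies in the $f^*$-invariant complement of $H^+$ in $H^{p,p}(X,\R)$, whose spectral radius is at most $\delta$. Absorbing polynomial Jordan-block factors into the gap $\delta'>\delta$ yields $\|\{\Omega_n\}\|\lesssim(\delta'/d_p)^n$, uniformly in $S$.

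Next, choose smooth closed $(p,p)$-forms $\beta_n$, each a linear combination of $\alpha_{p,1},\dots,\alpha_{p,h_p}$, representing $\{\Omega_n\}$, with $\Cc^2$-norms of size $\lesssim(\delta'/d_p)^n$. Since by Lemma \ref{l:super-1}(iii) the $\alpha_p$-normalized super-potentials of the $\alpha_{p,j}$'s vanish identically on $\mathscr D^0_{k-p+1}$, we have $\mathcal U_{\Omega_n}(R)=\mathcal U_{\Omega_n-\beta_n}(R)$ with $\Omega_n-\beta_n\in\mathscr D^0_p$. Lemma \ref{l:super-1}(ii) frees this super-potential from its normalization, and Proposition \ref{p:super-pot-1}(iii) (applied after approximating $R$ by smooth forms, as permitted by the density remark after Definition \ref{d:star}) identifies
\[
\mathcal U_{\Omega_n-\beta_n}(R)=\mathcal U_R(\Omega_n-\beta_n).
\]
The Hölder hypothesis on $\mathcal U_R$ thus reduces the problem to a quantitative $\Cc^{-2}$-bound on the exact current $\Omega_n-\beta_n$.

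This last step is achieved by constructing, via the $\partial\bar\partial$-lemma, an $\alpha_p$-normalized potential $V_n$ of $\Omega_n-\beta_n$ with $L^1$-decay $\|V_n\|_{L^1}\lesssim(\delta'/d_p)^n$, using the dynamical bound on $f_*$ acting on $(k-p+1,k-p+1)$-currents (spectral radius $\leq\delta$) together with the cohomological decay just obtained; interpolation via \eqref{e:holder} then transfers this to the required $\Cc^{-2}$-norm, and the Hölder hypothesis on $\mathcal U_R$ delivers the claimed estimate. The main obstacle is the uniformity of this construction in $S$ with $\|S\|_*\leq 1$ (no Hölder assumption on the super-potential of $S$ itself), which is the delicate technical point handled by \cite[Proposition 2.1]{DS10}; the freedom to choose any $\delta'\in(\delta,d_p)$ absorbs both the Jordan polynomial factors and the loss coming from converting $\|\cdot\|_{\Cc^{-2}}$ to $\|\cdot\|_{\Cc^{-2}}^\lambda$ via the Hölder exponent.
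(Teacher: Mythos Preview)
The paper does not give its own proof of this proposition; it simply records that the statement is \cite[Proposition~3.1]{DS10}, with the uniformity of $c$ in $S$ coming from \cite[Proposition~2.1]{DS10}. Your proposal is therefore already more detailed than the paper itself, and it correctly identifies the same two references as the source. The reduction you outline (pass to $\Omega_n=d_p^{-n}(f^n)^*S-sT_+$, subtract a combination $\beta_n$ of the $\alpha_{p,j}$'s to land in $\mathscr D_p^0$ without changing the $\alpha_p$-normalized super-potential, then use the symmetry of Proposition~\ref{p:super-pot-1}(iii) to switch to $\mathcal U_R(\Omega_n-\beta_n)$) is exactly the right skeleton.

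One caveat on your final paragraph. If you bound $|\mathcal U_R(\Omega_n-\beta_n)|$ by $\|\Omega_n-\beta_n\|_{\Cc^{-2}}^\lambda$ and then feed in $\|\Omega_n-\beta_n\|_{\Cc^{-2}}\lesssim(\delta''/d_p)^n$, you obtain only $(\delta''/d_p)^{\lambda n}$. Your claim that ``the freedom to choose any $\delta'\in(\delta,d_p)$'' absorbs this loss is not correct for every $\delta'$: it requires $\delta'>\delta^{\lambda}d_p^{1-\lambda}$, which fails when $\lambda$ is small and $\delta'$ is close to $\delta$. The argument in \cite{DS10} avoids this loss: rather than passing globally through $\|\cdot\|_{\Cc^{-2}}^\lambda$, it writes the potential $V_n$ iteratively and pairs $R$ with each piece, using directly the decay $\|d_p^{-j}(f^j)_*R\|_*\lesssim(d_{p-1}/d_p)^j$ on the $(k-p+1,k-p+1)$ side. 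The H\"older continuity of $\mathcal U_R$ enters only to make sense of the one term involving the non-smooth potential of $S$ (this is where \cite[Proposition~2.1]{DS10} is needed), not as an overall exponent on the rate. Since you already defer the construction of $V_n$ to \cite{DS10}, this is a minor imprecision in your sketch rather than a substantive gap, but as written your last sentence would degrade the rate in Theorem~\ref{t:intro-mixing}.
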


In particular, we will need the following consequence of the above result, 
see also Taflin \cite[Theorem 3.7.1]{Taflin11}
for a similar result in the case where $p=1$.

\begin{corollary}\label{c:cm}
Let $f$, $d_p$, $\delta'$
 be as above
 and $S$ be a 
 current in $\mathscr D_{p}$
 with $\|S\|_*\leq 1$ and
 such that 
$d^{-n}_p (f^n)^* (S)\to 0$.
 Let $\xi$ be a $(k-p,k-p)$-current
with 
$\|dd^c \xi\|_* \leq 1$ 
and such that the
  super-potential $\mathcal U_{dd^c \xi}$ of $dd^c \xi$
  (which is independent of the normalization)
is $(2,\lambda, 1)$-H\"older continuous for some $\lambda >0$.
Assume that either $S$ or $\xi$ is smooth.
Then
\[|
\langle d_p^{-n} (f^n)^* (S), \xi \rangle | \leq c (d_p/\delta')^{-n},\]
where $c > 0$ 
is a constant independent of $R$, $S$, and 
 $n$.
\end{corollary}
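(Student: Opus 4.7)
The plan is to reduce the pairing $\langle d_p^{-n}(f^n)^*(S), \xi\rangle$ to a super-potential evaluation to which Proposition \ref{prop:cm} applies, plus a cohomological correction handled separately using the spectral gap on $H^{p,p}(X,\mathbb R)$.

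First I would set $R := dd^c \xi$, so that $R \in \mathscr D^0_{k-p+1}$ satisfies $\|R\|_*\leq 1$ and, by hypothesis, has a $(2, \lambda, 1)$-H\"older continuous super-potential $\mathcal U_R$. Since $d_p^{-n}(f^n)^*(S) \to 0$, Lemma \ref{l:T+}(ii) forces the scalar $s$ of Proposition \ref{prop:cm} associated to $S$ to vanish. Applying that proposition (and using smoothness of $S$ or $\xi$ to legitimize the pairing) yields
\[
|\mathcal U_n(R)| \leq c_1 (d_p/\delta')^{-n},
\]
where $\mathcal U_n$ denotes the $\alpha_p$-normalized super-potential of $d_p^{-n}(f^n)^*(S)$.

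Next I would reconcile $\mathcal U_n(R) = \langle d_p^{-n}(f^n)^*(S), U_R\rangle$ with the desired pairing $\langle d_p^{-n}(f^n)^*(S), \xi\rangle$. The current $\xi$ is a potential of $R$ but is in general not $\alpha_p$-normalized. Setting $c_j := \langle \xi, \alpha_{p,j}\rangle$ for $j=1,\dots, h_p$, the current $U_R := \xi - \sum_j c_j \check{\alpha}_{p,j}$ is still a potential of $R$ (because $dd^c \check{\alpha}_{p,j} = 0$) and is $\alpha_p$-normalized, using the duality $\{\check{\alpha}_{p,i}\}\cdot\{\alpha_{p,j}\}=\delta_{ij}$. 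Hence
\[
\mathcal U_n(R) = \langle d_p^{-n}(f^n)^*(S), \xi\rangle - \sum_{j=1}^{h_p} c_j \, \langle d_p^{-n}(f^n)^*(S), \check{\alpha}_{p,j}\rangle.
\]

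The final step is to show that each correction term also decays like $(d_p/\delta')^{-n}$. Since $\check{\alpha}_{p,j}$ is a smooth closed form, the pairing $\langle d_p^{-n}(f^n)^*(S), \check{\alpha}_{p,j}\rangle$ depends only on the cohomology class $d_p^{-n}(f^n)^*\{S\}$. By the simple-action hypothesis there is an $f^*$-invariant splitting $H^{p,p}(X,\mathbb R)=\mathbb R\{T_+\}\oplus V$ with spectral radius of $f^*|_V$ equal to $\delta$. Since $s=0$, the class $\{S\}$ lies entirely in $V$, so $\|d_p^{-n}(f^n)^*\{S\}\| \leq c_2 (d_p/\delta')^{-n}$, the polynomial factors from possible Jordan blocks being absorbed into the gap $\delta<\delta'$. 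Combining this with the previous two displays gives the stated estimate, with the constant depending on the fixed data (the normalizations $\alpha_p, \check{\alpha}_p$ and the coefficients $c_j$ attached to $\xi$) but uniform in $n$ and in $S$ with $\|S\|_*\leq 1$. The main technical point is this normalization step: one must carefully identify $\mathcal U_n(R)$ with the pairing against $\xi$ up to an explicit finite-dimensional correction, and then invoke the spectral gap on $H^{p,p}(X,\mathbb R)$ to show the correction decays at the same rate as the super-potential estimate.
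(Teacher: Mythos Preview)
Your argument is correct and uses the same two ingredients as the paper---Proposition~\ref{prop:cm} for the super-potential term and the spectral gap on $H^{p,p}(X,\mathbb R)$ for a cohomological correction---but the organization is genuinely different. The paper splits $S$: it first treats the case $\{S\}=0$ (where $\langle S_n,\xi\rangle=\mathcal U_{S_n}(dd^c\xi)$ directly, no normalization issue), then writes a general $S$ as $S''+\sum a_j\alpha_j$ with $S''$ exact and the $\alpha_j$ chosen so that $\{\alpha_1\},\dots,\{\alpha_{h-1}\}$ span the invariant hyperplane $H$; for the smooth piece it further decomposes $S'_n=\sum c_{n,j}\alpha_j+dd^cV_n$ and bounds the $c_{n,j}$ via the matrix of $f^*_{|H}$. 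You instead leave $S$ alone and normalize the potential of $R=dd^c\xi$: setting $U_R=\xi-\sum_j\langle\xi,\alpha_{p,j}\rangle\,\check\alpha_{p,j}$ produces the single identity $\langle S_n,\xi\rangle=\mathcal U_n(R)+\sum_j c_j\langle S_n,\check\alpha_{p,j}\rangle$, and then both summands are handled in one stroke. Your route is a bit more economical since it avoids the case distinction on $S$; the paper's route has the minor advantage that in the exact case no correction term appears at all. One small point worth noting explicitly: in both arguments the resulting constant depends on $\xi$ through the finitely many numbers $\langle\xi,\alpha_{p,j}\rangle$ (respectively $\langle\alpha_j,\xi\rangle$), which is harmless in the applications since $\xi$ is either fixed or ranges over a $\mathcal C^l$-bounded family.
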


Observe that, since either $S$ or $\xi$ is smooth, the pairing in the statement is well defined.

\begin{proof}
We first consider the case where $S$ is exact. Recall that, in this case, the super-potential
of $S$ is independent of the normalization. As
we have $\{(f^n)^* (S)\}= (f^{n})^* \{S\}=0$,
the same is true for the super-potential
 of $(f^{n})^* (S)$ for all $n\in \mathbb N$. Setting $S_n := d_p^{-n} (f^n)^* (S)$, we then have
 \[
|\langle S_n, \xi\rangle|
=
|\mathcal U_{S_n} (dd^c \xi)|.
 \]
 By the assumptions on $\xi$, we can apply Proposition \ref{prop:cm} with $dd^c \xi$ instead of $R$ and $s=0$.
 The assertion in this case follows.

\medskip

Let us now consider the general case.
Observe that 
$d_p^{-n} (f^n)^*\{S\}\to 0$. So,
the set of 
the classes $\{S\}$
of the currents $S \in \mathscr D_p$ 
with this property is an
 hyperplane $H\subset H^{p,p} (X,\R)$,
which is a complement of the line generated by $\{T_+\}$ and is invariant under the action of $f^*$.
For simplicity,
denoting by $h$ the dimension of $H^{p,p}(X,\R)$,
we let $\alpha_1, \dots, \alpha_{h-1}$ be real smooth $(p,p)$-forms 
such that $\{\alpha_1\}, \dots, \{\alpha_{h-1}\}$ form a basis for $H$, and $\alpha_h$ be a smooth form in the class 
of $T_+$. We will consider 
 $\alpha$-normalized super-potentials of currents in $\mathscr D_p$, with $\alpha = (\alpha_1, \dots, \alpha_h)$.
 We also fix 
 $\check\alpha = (\check\alpha_1, \dots, \check\alpha_h)$
 a dual basis of $\alpha$.
 As $\{S\}\in H$, we have
 \[
S = a_1 \alpha_1 + \dots + a_{h-1} \alpha_{h-1} +
 S''
 \]
for some $S'' \in \mathscr D_{p}^0$
and $a_j \in \mathbb R$.
By the first part of the proof applied to $S''$,
it is enough to prove the statement for 
$S' := \sum_{j=1}^{h-1} a_j \alpha_j$ 
instead of $S$. Observe, in particular, that $S'$ is smooth and that 
the $\alpha$-normalized super-potential $\mathcal U_{S'}$ of $S'$
satisfies
$\mathcal U_{S'}=0$ by Lemma \ref{l:super-1}(iii).

\medskip

Denote $S'_n := d_p^{-n} (f^n)^*(S')$. We have
\[
S'_n = 
\sum_{j=1}^{h} c_{n,j} \alpha_j + dd^c V_n,
\]
where
the $c_{n,j}$
are defined by 
$\{S'_n\} = \sum_{j=1}^h c_{n,j} \{\alpha_j\}$
and
the $(p-1,p-1)$-current $V_n$  is chosen so
 that $\langle V_n, \check\alpha_j \rangle=0$ for all $1\leq j\leq h$. Observe that
$c_{n,h}=0$ for all $n$ 
 because of the invariance of $H$. It follows that, for any $\xi$ as in the statement, we have
\[
\langle S'_n , \xi\rangle
=
\sum_{j=1}^{h-1} c_{n,j} \langle \alpha_j, \xi\rangle +
\langle dd^c V_n, \xi\rangle=
 \sum_{j=1}^{h-1} c_{n,j} \langle \alpha_j, \xi\rangle +
 \mathcal U_{S'_n} ( dd^c \xi),
\]
where $\mathcal U_{S'_n}$ is the $\alpha$-normalized super-potential of $S'_n$.
By the assumptions on $\xi$ and Proposition \ref{prop:cm} applied with $dd^c \xi$
instead of $R$ and with $s=0$, 
we have
$|\mathcal U_{S'_n} ( dd^c \xi)|\lesssim (d_p/\delta')^{-n}$.
Hence, 
it is enough to prove that, for all $1\leq j \leq h-1$, we have
$|c_{n,j}|\lesssim (d_p/\delta')^{-n}$.

\medskip

Let $A$ be the $(h-1)\times (h-1)$ matrix
representing
$f^{*}_{|H}$ with respect to the basis $\{\alpha_j\}_{1\leq j \leq h-1}$, i.e., whose $j$-th column is given 
by the coordinates of $f^*\{\alpha_j\}$
with respect to the given basis.
Denoting $\underline c_n := (c_{n,1}, \dots, c_{n,h-1})^t$ and $\underline a := (a_{1}, \dots, a_{h-1})^t$, we see that
\[
\underline c_n = d_p^{-n}  A^n \underline a.
\]
 As the spectral radius of the action
 of  $f^*$ on $H$ is smaller than $\delta'$, we see that $\|A^n\| = o (\delta'^n)$. It follows that
 $|c_{n,j}|\lesssim (\delta'/d_p)^{-n}$, as desired. 
 The assertion follows.
\end{proof}

In our study we will also need the case where $q=p+1$.
It follows from the definition of $d_{p+1}$ and 
the fact that $d_{p+1}<d_p$
that
 $d_p^{-n}
(f^{n})^* (S)\to 0$ as
$n\to \infty$
for every $S\in \mathscr D_{p+1}$.
We will need later
a more quantitative version of this convergence, for $S$ with H{\"o}lder
continuous super-potentials.
 This is given by the next proposition. Recall that we normalize potentials with respect to a given
 $\alpha_{p+1} = (\alpha_{p+1,1},\dots, \alpha_{p+1,h_{p+1}})$,
 where $h_{p+1}$ is the dimension of $H^{p+1,p+1} (X,\R)$.

\begin{proposition}\label{p:sequence-holder}
Let $f$ be as above.
 Take $S \in
 \mathscr D_{p+1}$
with $\|S\|_*\leq 1$.
\begin{enumerate}
\item[{\rm (i)}] $\| d_p^{-n} (f^n)^* S\|_*\to 0$ as $n\to \infty$; in particular, $d_p^{-n} (f^n)^* S\to 0$ as $n\to \infty$.
\item[{\rm (ii)}] 
Assume that $S$ has an $(l,\lambda, M)$-H{\"o}lder continuous $\alpha_{p+1}$-normalized
super-potential.
Then there exist $\lambda'$ and $M'$
 independent of $S$ 
such that,
for every $n\geq 0$,
$d_p^{-n} (f^n)^* S$ has  a $(2,\lambda', M')$-H{\"o}lder continuous 
$\alpha_{p+1}$-normalized
super-potential.
\end{enumerate}
\end{proposition}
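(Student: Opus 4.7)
For part (i), the plan is a direct mass estimate based on cohomological spectral radius. Fix $\varepsilon>0$ and pick a positive closed current $S'\in\mathscr D_{p+1}$ with $|S|\leq S'$ and $\|S'\|\leq\|S\|_*+\varepsilon\leq 2$. Then $|(f^n)^*S|\leq (f^n)^*S'$, so $\|(f^n)^*S\|_*\leq\|(f^n)^*S'\|$, and the mass of the positive closed current $(f^n)^*S'$ depends only on its cohomology class. By the simple-action assumption we have $d_{p+1}<d_p$, and since $d_{p+1}$ equals the spectral radius of $f^*$ on $H^{p+1,p+1}(X,\R)$, for any small $\eta>0$ one gets $\|(f^n)^*\{S'\}\|\lesssim(d_{p+1}+\eta)^n$; choosing $\eta$ with $d_{p+1}+\eta<d_p$ yields $\|d_p^{-n}(f^n)^*S\|_*\lesssim((d_{p+1}+\eta)/d_p)^n\to 0$, which proves (i).

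For part (ii), set $T_n:=d_p^{-n}(f^n)^*S$. The idea is to use the comparison principle in Proposition~\ref{p:holder}(ii) to reduce the problem to finding a single positive closed $\Xi\in\mathscr D_{p+1}$ with H\"older continuous super-potential dominating $|T_n|$ for every $n\geq 0$. First, by Proposition~\ref{p:holder}(i) I may assume $l=2$ in the input H\"older estimate on $\mathcal U_S$. Next, I would replace $S$ by a positive closed dominator $S^\sharp\in\mathscr D_{p+1}$ with $|S|\leq S^\sharp$ and with H\"older continuous super-potential whose constants depend only on those of $\mathcal U_S$ (for example $S^\sharp=S^++S^-+C\omega^{p+1}$, where $S=S^+-S^-$ is an extremal decomposition and $C$ is sufficiently large; the smooth summand is H\"older by Lemma~\ref{l:holder}(i)). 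Then I would set
\[
\Xi:=\sum_{n\geq 0}d_p^{-n}(f^n)^*S^\sharp,
\]
which converges in the $\|\cdot\|_*$-norm to a positive closed current by part~(i) applied to $S^\sharp$, and clearly satisfies $\Xi\geq d_p^{-n}(f^n)^*S^\sharp\geq|T_n^\pm|$ for every $n$ and any extremal decomposition $T_n=T_n^+-T_n^-$.

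The main obstacle is to show that $\Xi$ itself admits a H\"older continuous super-potential with quantitative constants depending only on those of $S^\sharp$. This is where the full strength of Section~\ref{s:super-potentials} comes in: using the transformation identity
\[
\mathcal U_{d_p^{-n}(f^n)^*T}(R)=\mathcal U_T(d_p^{-n}(f^n)_*R)+d_p^{-n}\sum_j\langle U_R,(f^n)^*\alpha_{p+1,j}\rangle\,\langle T,\check\alpha_{p+1,j}\rangle,
\]
together with the Hodge-theoretic decomposition $(f^n)^*\alpha_{p+1,j}=\sum_i[A^n]_{ij}\alpha_{p+1,i}+dd^cB_{n,j}$ and the cohomological bound $\|A^n\|\lesssim(d_{p+1}+\eta)^n$, one controls each summand in the series defining $\mathcal U_\Xi$. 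The uniform bound $\|d_p^{-n}(f^n)_*R\|_*\lesssim 1$ for $R\in\mathscr D^0_{k-p}$ with $\|R\|_*\leq 1$ (coming from the Poincar\'e-dual spectral-radius statement, the spectral radius of $f_*$ on $H^{k-p,k-p}(X,\R)$ being $d_p$), combined with the interpolation~\eqref{e:holder} to trade the H\"older exponent against the $\Cc^l$-growth of the pullbacks, should yield summable term-by-term H\"older constants. Once $\Xi$ is shown to have a H\"older continuous super-potential, Proposition~\ref{p:holder}(ii) propagates the H\"older bound to each $T_n^\pm$ and hence to $T_n$, with uniform constants $(2,\lambda',M')$ independent of $S$ and of $n$.
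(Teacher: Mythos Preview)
Your argument for part~(i) is correct and matches the paper's.

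For part~(ii), however, there are two genuine gaps.

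\textbf{Gap 1: the dominator $S^\sharp$.} You need a positive closed current $S^\sharp$ with $|S|\leq S^\sharp$ whose super-potential is H\"older with constants depending only on those of $\mathcal U_S$. Your candidate $S^\sharp=S^++S^-+C\omega^{p+1}$ does not work: the hypothesis says only that $\mathcal U_S=\mathcal U_{S^+}-\mathcal U_{S^-}$ is H\"older, and there is no reason this forces $\mathcal U_{S^+}+\mathcal U_{S^-}$ (hence $\mathcal U_{S^\sharp}$) to be H\"older. Proposition~\ref{p:holder}(ii) goes the wrong way for this step. The paper sidesteps this issue entirely: it first treats the case $\{S\}=0$ directly (no positivity needed, the transformation identity $\mathcal U_{S_n}(R)=\mathcal U_S(d_p^{-n}(f^n)_*R)$ holds without correction terms), and then reduces the general case to the exact case plus a finite family of \emph{smooth} forms spanning $H^{p+1,p+1}(X,\R)$, which are trivially H\"older by Lemma~\ref{l:holder}(i). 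A companion-matrix decomposition then controls the cohomological part.

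\textbf{Gap 2: the missing decay.} Even granting $S^\sharp$, your sketch for the H\"older bound on $\mathcal U_\Xi$ does not close. After the transformation identity, the main term is $\mathcal U_{S^\sharp}(d_p^{-n}(f^n)_*R)$, and you need a bound of the form $M_n\|R\|_{\Cc^{-l}}^{\lambda'}$ with $\sum_n M_n<\infty$. The ingredients you list --- the mass bound $\|d_p^{-n}(f^n)_*R\|_*\lesssim 1$ and interpolation --- only give $\|d_p^{-n}(f^n)_*R\|_{\Cc^{-l}}\leq (L/d_p)^n\|R\|_{\Cc^{-l}}$ with $L$ the $\Cc^l$-operator norm of $f^*$, and $L$ is typically much larger than $d_p$, so the resulting series diverges. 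The crucial missing input is the exponential decay $\|d_p^{-n}(f^n)_*R\|_{\Cc^{-l}}\lesssim\theta^n$ for some $\theta<1$, which follows from Corollary~\ref{c:cm} applied to $f^{-1}$ (with $R$ in place of $S$ and a smooth test form in place of $\xi$). The paper combines this decay with the crude $(L/d_p)^n$ bound via a splitting into the regimes $n\gtrless -(2\log L)^{-1}\log\|R\|_{\Cc^{-4}}$: the crude bound handles small $n$, the $\theta^n$ decay handles large $n$, and the threshold is precisely what converts $\theta^n$ into a positive power of $\|R\|_{\Cc^{-4}}$. Without this mechanism (or an equivalent interpolation between the two bounds), the argument cannot produce a uniform H\"older exponent.
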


\begin{proof}
The first assertion is
 a consequence of the inequality $d_{p+1} < d_p$
 and of the fact that, for every $\epsilon>0$, we have
  $\|(f^n)^* (S)\|_*\leq C (d_{p+1}+\epsilon)^n \|S\|_*$,
  where the constant $C$ is independent of $S$ because the mass of a positive closed current only depends on its cohomology class.
It remains to prove the second assertion.
 By Proposition \ref{p:holder}(i)
 we can assume that $l=4$.
 
 \medskip
 
\noindent
 {\bf Particular case.}
 \smallskip
We first prove the third assertion 
assuming that
 $S$ is exact.
Recall that, in this case, the super-potentials $\mathcal U_S$ is independent of the normalization, see Lemma \ref{l:super-1}(ii).

\medskip

Fix $R\in 
\mathscr D^0_{k-p}$ with $\|R\|_*\leq 1$
and set $S_n := 
d_p^{-n}(f^n)^* S$.
We need
 to show that
\[
|\mathcal U_{S_n} (R)
| \leq M'
\|R\|_{\Cc^{-4}}^{\lambda'}
\]
for some $\lambda'$ and $M'$
independent of $S$ and $R$.
 The assertion is then a consequence of Proposition \ref{p:holder}(i).

 \medskip
 
 Fix $L\geq \max (2, \sup_{\|\Phi\|_{\Cc^4}\leq 1} \|f^* \Phi\|_{\Cc^4})$
 where the supremum
 is on smooth $(q-1,q-1)$-forms $\Phi$ on $X$.
As $S$ and $R$
are both
exact, by 
Lemma \ref{l:super-1}(ii)
and the definition of super-potential
 we have
\begin{equation}\label{eq:USnR}
|\mathcal U_{S_n} (R)
|
= 
d_p^{-n}
\big| \mathcal U_{S} \big((f^n)_* ( R)
\big)
\big|
\leq 
M
 \|
 d_p^{-n}
  (f^n)_* (R)\|_{\Cc^{-4}}^{\lambda}.
  \end{equation}
By Poincaré duality,
 $\|
 d_p^{-n} (f^n)_* (R)\|_*$ is bounded independently of $n$.
 We also have
\[
\begin{aligned}
\|
d_p^{-n}
(f^n)_* (R)\|_{\Cc^{-4}}
& 
=
\sup_{\|\Phi\|_{\Cc^{4}}\leq 1}
\big|
\big\langle
d_p^{-n} 
(f^n)_* (R), 
\Phi
\big\rangle
\big|
\\
&
=\sup_{\|\Phi\|_{\Cc^{4}}\leq 1}
\big|
\mathcal U_{d_p^{-n} (f^n)_* (R)} (dd^c \Phi)
\big|\\
& \lesssim \theta^n
\end{aligned}\]
for some $0<\theta<1$, where the last step follows
from Corollary \ref{c:cm} applied with $f^{-1}$, $R$, and $\Phi$
instead of $f$, $S$, and $\xi$. Observe that the assumption on $dd^c \xi$
 in that corollary
is satisfied by  $dd^c \Phi$ by
Lemma \ref{l:holder}(i).

\medskip

Assume first that $n\geq -(2\log L)^{-1} \log \|R\|_{\Cc^{-4}}$.
In this case, we have
\[
\|
d_p^{-n}
(f^n)_* (R)\|_{\Cc^{-4}}
\lesssim \theta^n
\leq
\|R\|_{\Cc^{-4}}^{\frac{1}{2}\frac{|\log \theta|}{\log L}}.
\]
Together with \eqref{eq:USnR}, this implies the assumption.

\medskip

Assume instead that $n\leq -(2\log L)^{-1} \log \|R\|_{\Cc^{-4}}$. 
Observe that this implies that $L^n \|R\|_{\Cc^{-4}}\leq \|R\|_{\Cc^{-4}}^{1/2}$.
Hence, for  all such $n$, we have
\[
\|
d^{-n}_p
(f^n)_* R\|_{\Cc^{-4}}\leq 
\sup_{\|\Phi\|_{\Cc^4}\leq 1}
d_p^{-n}
|\langle R, (f^n)^* \Phi\rangle|\leq 
d_p^{-n} L^n \|R\|_{\Cc^{-4}}\leq d_p^{-n} \|R\|_{\Cc^{-4}}^{1/2},
\]
which also implies the assertion in this case.
The proof in the particular case
is complete.

\medskip

\noindent
{\bf General case.}
We now consider the general case.
By the previous part of the proof, and
as $H^{p+1,p+1} (X,\R)$
is finite dimensional, 
it is enough to prove the assertion 
 for any
 finite family
 of 
smooth forms whose classes generate
$H^{p+1,p+1} (X,\R)$. Therefore, it is enough to prove the statement for a fixed smooth form
$S$.

\medskip
 Let $1\leq m< \dim H^{p+1,p+1}(X,\R)$
 be the minimal integer such that $\{S\}$, $\{f^{*} (S)\}, \dots, \{(f^{m})^* (S)\}$ 
 are linearly
  dependent over $\R$,
  and define $a_0, \dots, a_{m-1}$
 by the relation
 \begin{equation}\label{e:basis}
\{(f^{m})^{*} (S)\} = a_0 \{S\} + \dots + a_{m-1} \{ (f^{m-1})^* (S)\}.  
 \end{equation}
 Let $E$ be the subspace of $H^{p+1,p+1}(X,\R)$
spanned
by $\{S\}, \dots, \{ (f^{m-1})^* (S)\}$. Then these $m$
 classes form a basis $\mathcal B$ of $E$. 
The action of
$f^*_{|E}$ 
with respect to the basis
 $\mathcal B$ is represented by the  $m\times m$ matrix
 \[
A_E:=
\left(
\begin{array}{ccccc}
0 & 0 & \dots & 0 & a_0\\
1 & 0 & \dots & 0 & a_1\\
0 & 1 & \dots & 0 & a_2\\
\vdots & \vdots & \ddots & \vdots & \vdots\\
 0 & 0 & \dots & 1 & a_{m-1}
\end{array} 
\right).
 \]
 We denote by $B_E$ the transpose of $A_E$.
Fix $0<\epsilon< d_p -d_{p+1}$.
By the definition of $d_{p+1}$, 
we have $\|A_{E}^n\| = \|B_E^n\|\lesssim (d_{p+1}+\epsilon)^n = o(d_p^{n})$.
  
  \medskip

  It follows from \eqref{e:basis} that 
  \[
U:= (f^{m})^{*} (S) -\sum_{j=0}^{m-1}  a_{j} (f^{j})^* (S) 
  \]
is an exact
form, i.e., it belongs to $\mathscr D^0_{p+1}$. 
Since $(f^j)^* (S)$,
$0 \leq j \leq m-1$, and
$U$ are smooth, they have
 $(l, \lambda,N)$-H{\"o}lder continuous 
 $\alpha_{p+1}$-normalized
 super-potentials
 for some constant $N$.
  
  \medskip
  
  For every $n\geq 1$, set
  \[
\mathbb W_n :=
\left(
\begin{array}{c}
(f^n)^* (S)\\
(f^{n+1})^* (S)\\
\vdots\\
(f^{n+m-1})^* (S)
\end{array}
\right)  
\quad
\mbox{ and }
\quad
\mathbb U_n :=
\left(
\begin{array}{c}
0\\
0\\
\vdots\\
(f^n)^* (U)
\end{array}
\right),
  \]
  where both $\mathbb W_n$ and $\mathbb U_n$ have $m$ components.
As $\mathbb W_1 = B_{E} \mathbb W_0 + \mathbb U_0$, we obtain by induction that
\[
d_p^{-n}
\mathbb W_n = 
d_p^{-n}
B_{E}^{n} \mathbb W_0 +
d_p^{-n}
 \sum_{j=0}^{n-1}B_E^{n-j} \mathbb U_j.
\]
As the first component
of $\mathbb W_n$
is equal to
$(f^n)^* (S)$, we need to consider
the $\alpha_{p+1}$-normalized
super-potential
 of the first component of the RHS of the above expression.

\medskip

By the above,
 the currents 
$(f^{j})^* (S)$, $0\leq j \leq m-1$,
 have
an $(l,\lambda, N)$-H{\"o}lder continuous 
$\alpha_{p+1}$-normalized
super-potential.
As
$\|B_E^n\|= o( d_{p+1}^n)$,
 it follows that the
$\alpha_{p+1}$-normalized
super-potential of
every
 component
of 
$d_{p}^{-n}
B_{E}^{n} \mathbb W_0$ is
$(l, \lambda, M'/2)$-H{\"o}lder continuous, for some
$M'$ large enough.

\medskip

In order to prove the assertion, 
using again that $\|B_E^{n-j}\|= o(d_p^{n-j})$,
 it is enough to show that 
 there exist $l',\lambda',M'$
such that,
for every $n\geq 0$,
$U_n := d_p^{-n} (f^n)^* (U)$
has  a  $(l',\lambda', M'/2)$-H{\"o}lder
continuous 
$\alpha_{p+1}$-normalized
super-potential.
As $U$ is exact, this follows from the 
particular case considered above.
This concludes the proof.
\end{proof}

\subsection{Further properties of the Green currents}
We prove here two lemmas that we will need in the next section in order to apply Proposition \ref{prop:cm}
and Corollary \ref{c:cm}.
As in the previous section,
we let $f$ be an automorphism of $X$ with simple action on cohomology, and $d_p$
 its largest dynamical degree. In particular, the Green current $T_+$ has bi-degree $(p,p)$.
 We also fix a normalization $\alpha_{p+1}:=(\alpha_{p+1,1}, \dots, \alpha_{p+1, h_{p+1}})$, where
 $h_{p+1}$ is the dimension of $H^{p+1,p+1}(X,\R)$.

\begin{lemma}\label{l:new-norm-star}
Let $f$ be as above.
Let  $\kappa \geq 1$ be an integer and
 $g_0, \ldots,g_\kappa\colon X\to \R$
satisfy $\|g_j\|_{\Cc^2}\leq 1$. 
 Then there is a positive constant $c$ independent of the $g_j$'s
  such that for all $\ell_0, \ldots, \ell_{\kappa}\geq 0$ we have
\[\|dd^c \big((g_0\circ f^{\ell_0})\dots (g_\kappa\circ f^{\ell_\kappa})\big)\wedge {T}_+\|_{*}
\leq c.\]
\end{lemma}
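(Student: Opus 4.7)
Write $H := \prod_{j=0}^\kappa H_j$ with $H_j := g_j\circ f^{\ell_j}$, so $|H_j|\leq 1$ and $|H|\leq 1$. The plan is to expand $\ddc H$ via the Leibniz rule, dominate each resulting piece (wedged with $T_+$) by a positive closed $(p+1,p+1)$-current, and bound the mass of those majorants uniformly in the $\ell_j$'s by exploiting the invariance $(f^\ell)^* T_+ = d_p^\ell T_+$ together with the gap $d_{p+1}<d_p$.

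The Leibniz rule applied to a product of $\kappa+1$ functions yields
\[
\ddc H \;=\; \sum_{j=0}^{\kappa} A_j\,\ddc H_j \;+\; \sum_{0\leq j<k\leq\kappa} B_{jk}\bigl(dH_j\wedge \dc H_k + dH_k\wedge \dc H_j\bigr),
\]
where $A_j:=\prod_{i\neq j} H_i$ and $B_{jk}:=\prod_{i\neq j,k} H_i$ satisfy $|A_j|,|B_{jk}|\leq 1$. Since $\|g_j\|_{\Cc^2}\leq 1$, there is a constant $C_0>0$ independent of $g_j$ such that $|\ddc g_j|\leq C_0\,\omega$ and $dg_j\wedge \dc g_j\leq C_0\,\omega$ as $(1,1)$-forms. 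Pulling back by the biholomorphism $f^{\ell_j}$ preserves these inequalities, so $|\ddc H_j|\leq C_0(f^{\ell_j})^*\omega$ and $dH_j\wedge \dc H_j\leq C_0(f^{\ell_j})^*\omega$. For the mixed pieces, expanding $d(H_j\pm H_k)\wedge \dc(H_j\pm H_k)\geq 0$ gives the Cauchy--Schwarz-type inequality
\[
|dH_j\wedge \dc H_k + dH_k\wedge \dc H_j|\;\leq\; dH_j\wedge \dc H_j + dH_k\wedge \dc H_k.
\]
Combining these bounds with $|A_j|,|B_{jk}|\leq 1$ and wedging with the positive closed current $T_+$, every term in the Leibniz expansion is dominated, in absolute value, by a constant multiple of $(f^\ell)^*\omega\wedge T_+$ for some $\ell\in\{\ell_0,\dots,\ell_\kappa\}$.

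It remains to bound $\|(f^\ell)^*\omega\wedge T_+\|$ uniformly in $\ell$. Using $(f^\ell)^* T_+ = d_p^\ell T_+$ we rewrite
\[
(f^\ell)^*\omega\wedge T_+ \;=\; d_p^{-\ell}\,(f^\ell)^*\bigl(\omega\wedge T_+\bigr),
\]
and observe that $\omega\wedge T_+$ is a positive closed $(p+1,p+1)$-current, whose mass after pullback is controlled by the operator norm of $(f^*)^\ell$ on $H^{p+1,p+1}(X,\R)$. This spectral radius equals $d_{p+1}$; fixing $0<\epsilon<d_p-d_{p+1}$ we obtain
\[
\|(f^\ell)^*\omega\wedge T_+\|\;\leq\; d_p^{-\ell}\,\|(f^\ell)^*(\omega\wedge T_+)\|\;\lesssim\;\bigl((d_{p+1}+\epsilon)/d_p\bigr)^\ell\;\leq\; 1.
\]
Summing the $O(\kappa^2)$ terms of the Leibniz expansion then yields $\|\ddc H\wedge T_+\|_*\leq c$ with $c$ depending only on $\kappa$, $f$, and $\omega$. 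No serious obstacle arises; the whole content of the argument is the identification of $(f^\ell)^*\omega\wedge T_+$ as a natural majorant and the use of the gap $d_{p+1}<d_p$ to turn the pullback identity into a uniform mass estimate.
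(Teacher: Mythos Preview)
Your proof is correct and follows essentially the same approach as the paper: Leibniz expansion of $\ddc H$, Cauchy--Schwarz for the mixed first-order terms, domination by $\sum_j (f^{\ell_j})^*\omega\wedge T_+$, and then the identity $(f^{\ell})^*\omega\wedge T_+ = d_p^{-\ell}(f^{\ell})^*(\omega\wedge T_+)$ combined with the gap $d_{p+1}<d_p$ to bound the masses uniformly. The only cosmetic difference is that the paper writes the cross terms as $i\partial\tilde g_j\wedge\bar\partial\tilde g_l$ rather than $dH_j\wedge d^c H_k + dH_k\wedge d^c H_j$, which amounts to the same thing for real functions.
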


\begin{proof}
Set $\tilde g_j := g_j \circ f^{\ell_j}$ and notice
that
$ i \partial \tilde g_j \wedge \bar \partial \tilde g_j = (f^{\ell_j})^* (i\partial g_j \wedge \bar \partial g_j)$
for all $j$. 
We have
\[\begin{aligned}
dd^c \big(\tilde g_0 \dots \tilde g_\kappa\big) = \sum_{j=0}^\kappa dd^c \tilde g_j \prod_{l\neq j} \tilde g_{l}
+
\sum_{0\leq j\neq  l \leq \kappa}
 i \partial \tilde g_j \wedge \bar \partial \tilde g_l 
\prod_{m\neq j,l} \tilde g_m.
\end{aligned}
\]
Since
 $\|g_j\|_{\Cc^2}\leq 1$ 
we have
 $|g_j|\leq 1$
 and $|dd^c g_j|\lesssim \omega$. 
Then
\[
\Big|
\sum_{j=0}^\kappa dd^c \tilde g_j \prod_{l\neq j} \tilde g_{l}
\Big| \ \lesssim \ \sum_{j=0}^\kappa (f^{\ell_j} )^* (\omega)
\]
and an application of Cauchy-Schwarz inequality gives
\[
\Big| 
\sum_{0\leq j\neq l\leq \kappa}
i \partial \tilde
g_j 
\wedge \overline \partial 
\tilde g_l
\prod_{m\neq j,l} 
\tilde g_m
\Big|
\lesssim 
 \sum_{j=0}^\kappa
i \partial \tilde g_j \wedge \overline \partial \tilde g_j 
\lesssim \sum_{j=0}^\kappa
(f^{\ell_j})^* (\omega).
\]

 We deduce from the above inequalities
 and the invariance of $T_+$
  that
\begin{equation}\label{eq:sum-omega-T}
\begin{aligned}
\big|dd^c \big((g_0\circ f^{\ell_0})\ldots (g_\kappa\circ f^{\ell_\kappa})\big)\wedge {T}_+\big|
& \ \lesssim \ 
\sum_{j=0}^\kappa 
\big(f^{\ell_j} )^* (\omega \big)\wedge  T_+\\
& \ = \ 
\sum_{j=0}^\kappa
(f^{\ell_j} )^* ( \omega) \wedge   d_p^{-\ell_j} (f^{\ell_j} )^* ( T_+)\\
& \ = \ 
\sum_{j=0}^\kappa
d_p^{-\ell_j}
 (f^{\ell_j} )^* \big( \omega \wedge  T_+\big).
\end{aligned}
\end{equation}

We now
 use that the $(p+1,p+1)$-current $\omega\wedge T_+$ is positive
and closed.
We have
 $$\|(f^{\ell_j} )^* \big( \omega \wedge T_+\big)\| = \big \langle (f^{\ell_j} )^* \big( \omega \wedge  T_+\big), \omega^{k-p-1} \big \rangle= \big \langle \omega \wedge  T_+, (f^{\ell_j})_*(\omega^{k-p-1}) \big\rangle,$$
where the last form is positive closed.
The last pairing only depends on the cohomology classes of $\omega$, $T_+$,
and $(f^{\ell_j})_* (\omega^{k-p-1})$.
Hence, it is 
bounded by a constant times
 the mass of 
$(f^{\ell_j})_*(\omega^{k-p-1})$.
Since
\[
\|
(f^{\ell_j})_*(\omega^{k-p-1})\|=
\langle \omega^{p+1}, (f^{\ell_j})_*(\omega^{k-p-1})\rangle
=
\langle (f^{\ell_j})^* (\omega^{p+1}), \omega^{k-p-1}\rangle,
\]
for all $\epsilon >0$
this number 
 is also equal to
 $\|(f^{\ell_j})^* (\omega^{p+1})\|\lesssim (d_{p+1}+\epsilon)^n$.
As $d_{p+1}<d_p$
by assumption, it follows that 
each term in 
 the last sum in \eqref{eq:sum-omega-T} is bounded,
 which implies that the sum is also
 bounded.
The lemma follows.
\end{proof}

\begin{lemma}\label{l:R-holder}
Let $f$ be as above.
Let  $\kappa \geq 1$ be an integer and
 $g_0, \ldots,g_\kappa\colon X\to \R$
satisfy $\|g_j\|_{\Cc^2}\leq 1$. 
 Then there exist constants
 $0<\lambda\leq 1$ and $M>0$, independent of the $g_j$'s,
  such that for all $\ell_0, \ldots, \ell_{\kappa}\geq 0$ the current
\[ dd^c \big( (g_0\circ f^{\ell_0})\dots (g_\kappa\circ f^{\ell_\kappa}) \big)\wedge {T}_+
\]
has a 
$(2,\lambda,M)$-H{\"o}lder continuous
 super-potential.
\end{lemma}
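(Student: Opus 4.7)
The plan is to dominate the current $R := dd^c\bigl((g_0\circ f^{\ell_0})\cdots(g_\kappa\circ f^{\ell_\kappa})\bigr)\wedge T_+$ by a single positive closed current $\Xi$ that itself admits a uniformly H\"older continuous super-potential, and then invoke Proposition \ref{p:holder}(ii) to transfer H\"older continuity from $\Xi$ to $R$. The pointwise estimate already appearing in the proof of Lemma \ref{l:new-norm-star} gives, up to a constant $c_1$ independent of the $g_j$'s and $\ell_j$'s,
\[
|R| \,\leq\, \Xi \,:=\, c_1 \sum_{j=0}^{\kappa} d_p^{-\ell_j} (f^{\ell_j})^* (\omega \wedge T_+).
\]
Since $R$ is real and closed (being $dd^c$-exact), and $\Xi$ is positive closed, one can split $R = R^+ - R^-$ with $R^\pm := \tfrac{1}{2}(\Xi \pm R)$ positive closed and satisfying $R^\pm \leq \Xi$. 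After dividing through by the uniform bound $\|\Xi\|_* \leq c_2$ afforded by the estimates in the proof of Lemma \ref{l:new-norm-star}, both $R^\pm/c_2$ and $\Xi/c_2$ lie in the set $\{S \in \mathscr{D}_{p+1} \colon \|S\|_* \leq 1\}$ where Propositions \ref{p:holder} and \ref{p:sequence-holder} apply.

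The key step is to establish that $\Xi$ has a $(2,\lambda_1, M_1)$-H\"older continuous $\alpha_{p+1}$-normalized super-potential with constants independent of the $\ell_j$'s. To see this, observe that the smooth form $\omega$ has a $(2,1,c)$-H\"older continuous super-potential by Lemma \ref{l:holder}(i), and $T_+$ has a $(2,\lambda,M)$-H\"older continuous super-potential by Lemma \ref{l:T+}(i). Proposition \ref{p:holder}(iii) then yields a $(2,\lambda_0, M_0)$-H\"older continuous $\alpha_{p+1}$-normalized super-potential for the positive closed $(p+1,p+1)$-current $\omega\wedge T_+$. Because $d_{p+1}<d_p$, Proposition \ref{p:sequence-holder}(ii) applied to (a rescaling of) $\omega\wedge T_+$ provides constants $\lambda_1,M_1$ such that the super-potential of each $d_p^{-\ell_j}(f^{\ell_j})^*(\omega\wedge T_+)$ is $(2,\lambda_1,M_1)$-H\"older continuous uniformly in $\ell_j \geq 0$. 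Summing in $j$ and using linearity of super-potentials gives a uniform H\"older bound for $\Xi$.

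With $\Xi$ having a uniformly H\"older continuous super-potential, Proposition \ref{p:holder}(ii) applied to the pairs $R^\pm \leq \Xi$ yields $(2,\lambda_2,M_2)$-H\"older continuous $\alpha_{p+1}$-normalized super-potentials for $R^\pm$, with $\lambda_2,M_2$ independent of the $g_j$'s and $\ell_j$'s. Writing $\mathcal{U}_R = \mathcal{U}_{R^+} - \mathcal{U}_{R^-}$ and restoring the omitted scaling factors then gives the claim. The main obstacle in this plan is securing the uniformity in the iteration exponents $\ell_j$; this is exactly what Proposition \ref{p:sequence-holder}(ii), built on the strict gap $d_{p+1}<d_p$ coming from simple action on cohomology, is designed to provide. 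The rest is an assembly of the super-potential calculus developed in Section \ref{s:super-potentials}.
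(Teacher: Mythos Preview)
Your proof is correct and follows essentially the same route as the paper: dominate $R$ by the positive closed current $\Xi = c\sum_j d_p^{-\ell_j}(f^{\ell_j})^*(\omega\wedge T_+)$ using the estimate from Lemma~\ref{l:new-norm-star}, establish uniform H\"older continuity of the super-potential of $\Xi$ via Lemma~\ref{l:T+}(i), Proposition~\ref{p:holder}(iii), and Proposition~\ref{p:sequence-holder}(ii), and then transfer this to $R$ through the decomposition $R^\pm = \tfrac12(\Xi\pm R)$ and Proposition~\ref{p:holder}(ii). The paper's argument is organized identically, only with $V$ in place of your $\Xi$ and slightly less explicit bookkeeping of the $\|\cdot\|_*$-normalizations.
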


Observe that the current in the statement is exact, hence its super-potential is independent of the normalization.

\begin{proof}
We denote the current in the statement by $R$. It is a 
real exact $(p+1,p+1)$-current.
Recall that, by 
Proposition \ref{p:holder}(i), it is enough to show that $R$
has an  $(l,\lambda,M)$-H{\"o}lder continuous 
super-potential, for some
$l>0$, $0<\lambda\leq 1$, and $M>0$
independent of $g_0, \dots, g_\kappa$ and $\ell_0, \dots, \ell_\kappa$.

\medskip

By 
Proposition \ref{p:holder}(ii),
it is enough to show that  there exists a positive closed current $V$ with
a $(2,\lambda', M')$-H{\"o}lder
continuous 
$\alpha_{p+1}$-normalized
super-potential
such that $|R| \leq V$.
Indeed, this implies that both $V+R$ and $V-R$
are positive, closed, and bounded by $2V$. Hence, they have $(2,\lambda'',M'')$-H{\"o}lder continuous 
$\alpha_{p+1}$-normalized
super-potentials
for some constants
$0\leq \lambda''\leq 1$ and $M'' >0$.
 It follows that
$R$ has  a $(2,\lambda'',M'+M'')$-H{\"o}lder continuous 
super-potential.

\medskip

 We have already seen
 in \eqref{eq:sum-omega-T} that
\[
|dd^c \big(g_0\circ f^{\ell_0})\dots (g_\kappa\circ f^{\ell_\kappa}) \big) \wedge {T}_+| \lesssim
\sum_{j=0}^\kappa 
d_p^{-\ell_j}
 (f^{\ell_j} )^* \big( \omega \wedge T_+\big).
\]
In order to prove the assertion, it is then enough to prove that there exist 
$\lambda'$ and $M'$ such that, for
every $j\geq 0$,
the current $d_p^{-j}
 (f^{j} )^* \big( \omega \wedge  T_+\big)$ has
 a $(2,\lambda', M')$-H{\"o}lder continuous
 $\alpha_{p+1}$-normalized
  super-potential.
 For $j=0$, this follows from Lemma \ref{l:T+}(i)
 and Proposition \ref{p:holder}(iii).
 Proposition \ref{p:sequence-holder}(ii)
 implies that that same holds for all $j \in\mathbb N$.
The proof is complete.
\end{proof}

\section{Exponential mixing of all orders and Central Limit Theorem}

We prove now Theorem \ref{t:intro-mixing}. By interpolation techniques
\cite{Triebel95}
 as in \cite[pp. 262-263]{Dinh05}, it 
is enough to prove the statement in the case $\gamma=2$. 
The statement is clear for $\kappa=0$. By induction, we can assume that
the statement holds for up to $\kappa$
functions and prove it for $\kappa+1\geq 2$ functions.
In particular, by the induction assumption, 
we are allowed to add to the $g_j$'s
 a constant
and assume that $\langle\mu, g_j\rangle=0$
for all $0\leq j\leq \kappa$. 
We can also assume that 
$\|g_j\|_{\Cc^2}\leq 1$ for all $0\leq j \leq \kappa$.
Then, we need to show that
\[\Big|
\langle \mu,
 g_0 ( g_1 \circ  f^{n_1} )
\dots
 (g_{\kappa} \circ f^{n_{\kappa}})
\rangle
\Big|
\lesssim 
d^{-\min_{0\leq j \leq \kappa-1} ( n_{j+1}- n_j)/2}.\]
We can also assume that $n_1$ is even.
Indeed, by the invariance of $\mu$,
the case where $n_1$
is odd 
can be treated similarly by replacing 
$n_j$ with $n_j-1$ for $1\leq j \leq \kappa$ and
$g_0$ with $g_0 \circ f^{-1}$.

\medskip

Consider the automorphism 
$F$ of the compact K{\"a}hler
manifold
$\mathbb X := X\times X$ given by $F(z,w):= (f(z),f^{-1}(w))$. 
We first show that
$F^{-1}=(f^{-1},f)$ 
satisfies the assumptions of Proposition \ref{prop:cm}
and Corollary \ref{c:cm}.
Recall that $\delta$ is the 
 auxiliary degree of $f$ and that by assumption we have $\delta<d_p$.

\medskip

\begin{lemma}
\label{l:F-applies}
The automorphisms $F$ and $F^{-1}$ 
of $\mathbb X$
have simple action on cohomology.
  More precisely, 
the dynamical degrees of order $k$ of $F$ 
and $F^{-1}$
are equal to $d_p(f)^2$, 
they are eigenvalues 
of multiplicity one
of both $F^*$ and $F_*$
acting on $H^{k,k} (\mathbb X,\C)$, 
and
 all the other dynamical degrees of $F$ and $F^{-1}$,
  as well
as the other eigenvalues of $F^*$ and $F_*$
 on $H^{k,k} (\mathbb X,\C)$, 
have modulus
smaller than or equal to $d_p \cdot \delta$.
\end{lemma}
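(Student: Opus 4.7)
The plan is to reduce everything to the action of $f$ on $X$ via the K\"unneth decomposition of $\mathbb X = X \times X$. First I would write
\[
H^{q,q}(\mathbb X, \mathbb C) = \bigoplus_{\substack{i + j = q \\ i' + j' = q}} H^{i,i'}(X, \mathbb C) \otimes H^{j,j'}(X, \mathbb C),
\]
and observe that $F^* = f^* \otimes f_*$ acts on each summand as $f^*|_{H^{i,i'}} \otimes f_*|_{H^{j,j'}}$. By Poincar\'e duality on $X$, the spectrum of $f_*$ on $H^{j,j'}$ coincides with that of $f^*$ on $H^{k-j,k-j'}$, so every eigenvalue of $F^*$ on this summand is a product of two eigenvalues of $f^*$ taken on explicit cohomology groups of $X$.

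For $q = k$, the key K\"unneth component is $H^{p,p}(X) \otimes H^{k-p,k-p}(X)$, on which both relevant spaces become $H^{p,p}(X)$. The simple-action hypothesis on $f$ identifies $d_p$ as the unique eigenvalue of $f^*|_{H^{p,p}}$ of modulus $d_p$ and of algebraic multiplicity one; since algebraic multiplicities multiply on a tensor product, $d_p^2$ appears as a simple eigenvalue of $F^*$ on this component, while every other eigenvalue on it has modulus at most $d_p \cdot \delta$ by the very definition of $\delta$. For the remaining components with $(i, i') \neq (p, p)$, I would bound the spectral radius by $\rho(f^*|_{H^{i,i'}})^2 \leq d_i \cdot d_{i'}$, using the mixed spectral estimate $\rho(f^*|_{H^{r,s}}) \leq \sqrt{d_r d_s}$ for K\"ahler automorphisms; since $(i, i') \neq (p, p)$, one of $d_i, d_{i'}$ is at most $\delta$ while the other is at most $d_p$, yielding $d_i d_{i'} \leq d_p \cdot \delta$. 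The corresponding statement for $F_*$ on $H^{k,k}(\mathbb X)$ then follows because $F^*$ and $F_*$ are transposes under the cup-product Poincar\'e pairing there, hence share their Jordan structure.

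For $d_q(F)$ with $q < k$, the same K\"unneth analysis gives that each summand $H^{i,i'} \otimes H^{j,j'}$ with $i + j = i' + j' = q$ contributes a spectral radius at most $\sqrt{d_i d_{i'} d_{k-q+i} d_{k-q+i'}}$; since $k - q \geq 1$, for each $a \in \{i, i'\}$ the indices $a$ and $k - q + a$ differ and cannot both equal $p$, so $\sqrt{d_a d_{k - q + a}} \leq \sqrt{d_p \delta}$, and the full product is bounded by $d_p \cdot \delta$. The claim for $F^{-1} = (f^{-1}, f)$ follows from its conjugacy with $F$ via the swap $(z, w) \mapsto (w, z)$ of the two factors of $\mathbb X$. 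The simple-action assertions are then immediate: $d_k(F) = d_p^2$ strictly exceeds every other dynamical degree (bounded by $d_p \delta$), and $d_p^2$ is a simple eigenvalue on $H^{k,k}(\mathbb X, \mathbb C)$. The main obstacle is the mixed-Hodge spectral bound $\rho(f^*|_{H^{r,s}}) \leq \sqrt{d_r d_s}$ off the Hodge diagonal; once this log-concavity-type estimate for K\"ahler automorphisms is invoked, the rest reduces to direct K\"unneth bookkeeping combined with the simple-action hypothesis applied to $H^{p,p}(X)$.
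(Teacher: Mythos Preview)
Your proposal is correct and follows essentially the same route as the paper: the K\"unneth decomposition of $H^{q,q}(\mathbb X,\mathbb C)$, the observation that $F^*$ respects it and acts as $f^*\otimes f_*$ on each summand, and the mixed spectral bound $\rho(f^*|_{H^{r,s}})\leq\sqrt{d_r d_s}$ (cited in the paper from \cite{Dinh05JGA}) combined with Poincar\'e duality and the simple-action hypothesis on $H^{p,p}(X)$. Your write-up is in fact more detailed than the paper's, which leaves the bookkeeping implicit; the only point you might make explicit is that the case $q>k$ for $d_q(F)$ follows from $d_q(F)=d_{2k-q}(F)$, which is an immediate consequence of the swap conjugacy you already invoke.
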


\begin{proof}
By K\"unneth formula \cite[Theorem 11.38]{Voisin02}, for every $0\leq q \leq k$
we have a canonical isomorphism
\[
H^{q,q} (X\times X, \C) = \bigoplus_{s+r=q} H^{s,r} (X,\C)\otimes H^{r,s} (X,\C).
\]
The operators $F^*$ and $F_*$
preserve
 the above
decomposition. By \cite{Dinh05JGA}, the spectral radius  of $f^*$ on
$H^{r,s}(X,\C)$ is (equal to the spectral radius of $f_*$ on $H^{k-r,k-s}(X,\C)$ and)
smaller than or equal to $\sqrt{d_r d_s}$. 
The assertion follows from the
fact that $f$ 
and $f^{-1}$ have
 simple actions on cohomology and the  definitions of $p$ and $\delta$.
\end{proof}

 In  this section,
 we will denote
 by  $T_{\pm}$
 the Green currents of $f$
  and 
  by $\mathbb T_{\pm}$
the
 Green currents of $F$.
More precisely, we fix  Green currents $T_{\pm}$
for $f$
(they are unique up to a multiplicative constant)
 such that 
$\mu= T_+\wedge T_-$ and we set
\[
\mathbb T_+ = T_ + \otimes T_- \quad \mbox{ and }
\quad 
\mathbb T_- = T_- \otimes T_+,  
\] 
see \cite[Section 4.1.8]{Federer} 
for the tensor (or cartesian) product of currents.
Since $f^* (T_+) = d_p T_+$
and
$f_* (T_-)=d_p T_-$,
 we have $F^* ( \mathbb T_+)= d^2_p \mathbb T_+$
 and $F_* (\mathbb T_-)= d^2_p \mathbb T_-$.

\medskip

Set $h:= g_1 (g_2 \circ f^{n_2-n_1}) \ldots (g_{\kappa}\circ f^{n_\kappa-n_1})$. 
Recalling that $\langle\mu, g_0\rangle=0$ and that $\|g_j\|_{\Cc^2}\leq 1$,
by the induction hypothesis
it is enough to show that
\[
\big|
\langle 
\mu, g_0 (h\circ f^{n_1})\rangle
\big|
\lesssim
 d_p^{-n_1/2}.
\]

We denote by $(z,w)$ the coordinates on $\mathbb X= X\times X$ and we set
\[\Psi (z,w):= g_0(w) h(z).\]

\begin{lemma}\label{l:properties-R}
The following assertions hold:
\begin{enumerate}
\item[{\rm (i)}] $\|dd^c \Psi \wedge \mathbb T_+\|_{*} \leq c$;
\item[{\rm (ii)}] $dd^c \Psi  \wedge \mathbb T_+ $ has 
 a $(2,\lambda, M)$-H\"older continuous
super-potential,
\end{enumerate}
where $c$, $\lambda$, and $M$
 are positive constants depending on $\kappa$, but not on the $g_j$'s and the $n_j$'s.
\end{lemma}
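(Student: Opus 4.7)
The plan is to lift the problem to the product manifold $\mathbb X = X \times X$ equipped with the automorphism $F(z,w) = (f(z), f^{-1}(w))$ and to reduce both assertions to direct applications of Lemmas \ref{l:new-norm-star} and \ref{l:R-holder}, applied now to $F$ rather than to $f$.

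First, I would rewrite $\Psi$ in a form to which those lemmas apply. Define functions $G_0, G_1, \ldots, G_\kappa$ on $\mathbb X$ by $G_0(z,w) := g_0(w)$ and $G_j(z,w) := g_j(z)$ for $1 \leq j \leq \kappa$. Since $\|g_j\|_{\mathcal C^2}\leq 1$, each $G_j$ satisfies $\|G_j\|_{\mathcal C^2(\mathbb X)} \leq c'$ for a constant $c'$ depending only on the product K\"ahler structure on $\mathbb X$. Setting $m_0 := m_1 := 0$ and $m_j := n_j - n_1 \geq 0$ for $2 \leq j \leq \kappa$, and using $F^m(z,w) = (f^m(z), f^{-m}(w))$, a short direct check shows that
\[
\Psi = (G_0 \circ F^{m_0})(G_1 \circ F^{m_1}) \cdots (G_\kappa \circ F^{m_\kappa}),
\]
with all $m_j \geq 0$.

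Next, I would verify that the hypotheses of Lemmas \ref{l:new-norm-star} and \ref{l:R-holder} are met for $F$. By Lemma \ref{l:F-applies}, $F$ has simple action on cohomology, with main dynamical degree $d_p^2$ (occurring in bi-degree $(k,k)$) and auxiliary dynamical degree at most $d_p\delta < d_p^2$; in particular the spectral gap used throughout Section \ref{s:green} is available. Moreover, from $f^*(T_+)=d_p T_+$ and $f_*(T_-)=d_p T_-$ one gets immediately $F^*(\mathbb T_+) = d_p^2 \mathbb T_+$, so $\mathbb T_+ = T_+ \otimes T_-$ is indeed the main Green current of $F$.

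With this setup, applying Lemma \ref{l:new-norm-star} to $F$, $\mathbb T_+$, and the rescaled functions $c'^{-1} G_j$ (with iterates $m_0, \ldots, m_\kappa$), and then absorbing the harmless factor $(c')^{\kappa+1}$ into a constant depending only on $\kappa$, yields assertion (i). Applying Lemma \ref{l:R-holder} to the same data yields assertion (ii), since its conclusion is stable under this kind of rescaling. The only nontrivial step is thus the reduction on $\mathbb X$: once one observes that the mixed observable $\Psi$ is itself a product of functions composed with iterates of $F$, and that $F$ inherits both the simple action and the required spectral gap from $f$ via Lemma \ref{l:F-applies}, the genuine pluripotential content --- already contained in Section \ref{s:green} --- does the rest, and no further obstacle arises.
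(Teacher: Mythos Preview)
Your proposal is correct and follows essentially the same route as the paper: both lift to $\mathbb X=X\times X$, write $\Psi$ as a product of functions precomposed with nonnegative iterates of $F$, and then invoke Lemmas \ref{l:new-norm-star} and \ref{l:R-holder} for $F$ and $\mathbb T_+$, using Lemma \ref{l:F-applies} to guarantee that $F$ has simple action on cohomology. Your choice of exponents $m_0=m_1=0$, $m_j=n_j-n_1$ for $j\geq 2$ is the right one, and your remark about rescaling the $G_j$'s to enforce $\|G_j\|_{\mathcal C^2}\leq 1$ is a harmless bookkeeping step that the paper leaves implicit.
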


\begin{proof}
The two assertions are consequences of Lemmas
\ref{l:new-norm-star} and \ref{l:R-holder}
applied to $F\colon \mathbb X \to \mathbb X$,
 respectively.
 Here, we use the functions 
 $\tilde g_0 (z,w):= g_0 (w)$
and $\tilde g_j (z,w):= g_j (z)$ for $j\geq 1$, and the integers
$\ell_0:=0$,  $\ell_1:=0$, and $\ell_j := n_{j+1}-n_j$
for $j\geq 2$.
\end{proof}

\begin{proof}[End of the proof of Theorem \ref{t:intro-mixing}]
Recall that we are assuming that $n_1$ is even. By the invariance of $\mu$, we see that
\[
\langle 
\mu, g_0 (h\circ f^{n_1})\rangle
=
\langle
\mu, (g_0 \circ f^{-n_1/2})( h \circ f^{n_1/2})
\rangle.
\]
We first transform the above
integral on $X$
to an integral on $\mathbb X$
by means of the map $F$. Namely, 
using the invariance of $\mathbb T_+$,
we have
\begin{equation}\label{e:for-mixing}
\begin{aligned}
\big\langle
\mu, 
(g_0 \circ f^{-n_1/2})(h \circ f^{n_1/2})
\big\rangle
&=
\big\langle 
T_+ \wedge T_-,  (g_0 \circ f^{-n_1/2} )(h \circ f^{n_1/2})\big\rangle\\
& =
\big\langle
(T_+ \otimes  T_-) \wedge [\Delta],
(g_0 \circ f^{-n_1/2} (w) )(
 h\circ f^{n_1 /2} (z))\big\rangle\\
 &=
 \big\langle
d_p^{-n_1} (F^{n_1/2})^* (\mathbb T_+  )\wedge [\Delta],
(F^{n_1/2})^*
 \Psi \big\rangle
 \\
 &=
 \big\langle
\mathbb T_+\wedge
(d_p^{-n_1}  (F^{n_1/2})_* [\Delta]),
\Psi 
  \big\rangle\\
 & =
  \big\langle
 d_p^{-n_1} (F^{n_1/2})_* [\Delta],
 \Psi 
   \mathbb T_+ \big\rangle.
 \end{aligned}
\end{equation}
Observe, in particular, that the wedge product $[\Delta]\wedge \mathbb T_+$ is well defined by Proposition \ref{p:super-pot-1}(iv).
\begin{lemma}\label{l:Delta-s1}
We have $d_p^{-2n} (F^{n})_* [\Delta]\to \mathbb T_-$
as $n\to \infty$.
\end{lemma}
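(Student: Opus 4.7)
The plan is to apply Lemma \ref{l:T+}(ii) to the automorphism $F^{-1}$, which yields convergence $d_p^{-2n}(F^n)_*[\Delta] \to s\,\mathbb{T}_-$ for some constant $s$, and then to pin down $s=1$ by a cohomological computation that uses the normalization $\|\mu\|=1$.

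First, I would verify that Lemma \ref{l:T+}(ii) applies to $F^{-1}$. By Lemma \ref{l:F-applies}, $F^{-1}$ has simple action on cohomology, its main dynamical degree is $d_p^2$, and it is achieved at order $k$. Since $(F^{-1})^*\mathbb{T}_- = F_*\mathbb{T}_- = d_p^2\,\mathbb{T}_-$ and $\mathbb{T}_-$ is positive closed of bi-degree $(k,k)$, it plays the role of the main Green current for $F^{-1}$. Taking $S=[\Delta]\in\mathscr D_k(\mathbb X)$ and noting $((F^{-1})^n)^* = (F^n)_*$, Lemma \ref{l:T+}(ii) yields
\[
d_p^{-2n}(F^n)_*[\Delta]\ \longrightarrow\ s\,\mathbb T_-,
\]
where $s$ is the unique constant such that $d_p^{-2n}(F^n)_*\{[\Delta]\} \to s\{\mathbb T_-\}$ in $H^{k,k}(\mathbb X,\mathbb R)$.

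Next I would identify $s$ by cupping with the $F^*$-invariant class $\{\mathbb T_+\}$ (eigenvalue $d_p^2$). For every $n$,
\[
d_p^{-2n}\langle (F^n)_*\{[\Delta]\},\{\mathbb T_+\}\rangle \,=\, \langle\{[\Delta]\},\,d_p^{-2n}(F^n)^*\{\mathbb T_+\}\rangle \,=\, \langle\{[\Delta]\},\{\mathbb T_+\}\rangle,
\]
while the limit of the left-hand side as $n\to\infty$ equals $s\,\langle\{\mathbb T_-\},\{\mathbb T_+\}\rangle$. Both cohomological pairings can then be computed explicitly using the diagonal identity $\langle\{[\Delta]\},\{\alpha\}\otimes\{\beta\}\rangle_{\mathbb X}=\langle\{\alpha\},\{\beta\}\rangle_X$ (valid for smooth representatives and passing to $T_\pm$ by density, with the intersections being well-defined thanks to Lemma \ref{l:T+}(i) and Proposition \ref{p:super-pot-1}(iv)). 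Writing $\mathbb T_+ = T_+\otimes T_-$ and $\mathbb T_- = T_-\otimes T_+$ and using the normalization $\int_X T_+\wedge T_- = \|\mu\|=1$, one finds $\langle\{[\Delta]\},\{\mathbb T_+\}\rangle = 1$ and $\langle\{\mathbb T_-\},\{\mathbb T_+\}\rangle = 1$, whence $s=1$.

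I do not anticipate a serious obstacle: the convergence is a direct invocation of the results of Section \ref{s:green} (via Lemma \ref{l:F-applies}), and the determination of the normalization reduces to the elementary cohomological identity for the diagonal together with the choice $\|T_+\wedge T_-\|=1$ fixed at the outset.
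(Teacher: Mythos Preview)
Your proposal is correct and takes essentially the same approach as the paper: invoke Lemma~\ref{l:T+}(ii) for $F^{-1}$ (via Lemma~\ref{l:F-applies}) to get $d_p^{-2n}(F^n)_*[\Delta]\to s\,\mathbb T_-$, then determine $s$ by pairing with $\mathbb T_+$ and using its $F^*$-invariance together with $\|\mu\|=1$. The only cosmetic difference is that the paper carries out the pairing directly at the level of currents (justified by Proposition~\ref{p:super-pot-1}(iv)), whereas you phrase the same computation in cohomology; the two are equivalent here since the pairings involved depend only on cohomology classes.
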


\begin{proof}
By Lemma \ref{l:F-applies}, $F^{-1}$ has simple action on
 cohomology and its main dynamical degree is that
of order $k$, which is equal to $d_p^2$.
As $[\Delta]$ has bi-degree $(k,k)$,  
there exists $s \in \mathbb R$ such that
$d_p^{-2n} (F^{n})_* [\Delta]\to s\mathbb T_-$ as $n\to \infty$. Hence, we only need to show that $s=1$.
We have 
\[
\big\langle d_p^{-2n} (F^n)_* [\Delta], \mathbb T_+ \big\rangle\to 
\langle s\mathbb T_-, \mathbb T_+ \rangle 
= s\langle T_-\otimes T_+, T_+\otimes T_-\rangle
=
s\|\mu\|^2=s.
\]
On the other hand, by the invariance of $\mathbb T_+$,
for every $n\in \mathbb N$ we also have
\[
\big\langle d_p^{-2n} (F^n)_* [\Delta], \mathbb T_+ \big\rangle
=
\big\langle  [\Delta],  d_p^{-2n} (F^n)^*\mathbb T_+ \big\rangle=
\langle  [\Delta],T_+ \otimes T_-\rangle = \|\mu\|=1.
\]
The assertion follows.
\end{proof}

We will apply the results of Sections \ref{s:super-potentials}
and \ref{s:green}
 to the
automorphism
$F^{-1}$ of $\mathbb X$.
In order to do this, 
for simplicity,
we let $h$ be the dimension of $H^{k,k} (\mathbb X, \R)$
and we fix a collection
 $\alpha := (\alpha_1, \dots, \alpha_h)$  of real smooth $(k,k)$-forms
on $\mathbb X$ 
with the property 
that  $\{\alpha_1\}= \{\mathbb T_-\}$ 
and $\{\alpha_2\}, \dots, \{\alpha_h\}$ are a basis for the
 $F_*$-invariant hyperplane in $H^{k,k} (\mathbb X,\R)$
transversal to the line generated
by $\{T_-\}$.
In the following, we only consider $\alpha$-normalized super-potentials
for currents in $\mathscr D_k (\mathbb X)$.

\medskip

By Lemma \ref{l:properties-R},
the current $R:= dd^c \Psi \wedge \mathbb T_+$
satisfies $\|R\|_*\lesssim 1$ and
has a  
$(2,\lambda,M)$-H{\"o}lder continuous potential
$\mathcal U_R$, for some $0<\lambda\leq 1$
 and $M>0$ independent of the
 $g_j$'s and $n_j$'s.
 By linearity, up to multiplying $g_0$ by
  a constant, we can assume that $\|R\|_*\leq 1$ and 
  that $\mathcal U_R$ is $(2,\lambda,1)$-H{\"o}lder continuous, for some $0<\lambda\leq 1$.
Recall that
 $\delta  <\delta' < d_p$. 
 By Lemmas \ref{l:F-applies}
 and \ref{l:Delta-s1},
 we can apply 
Corollary \ref{c:cm}
with $F^{-1}$, 
$\Psi \mathbb T_+$, and  $n_1/2$
 instead of $f$,
 $\xi$, and $n$,
and get that, for all smooth $S\in \mathscr D_k (\mathbb X)$
such that $d_p^{-2n} (F^n)_* (S)\to 0$,
we have
\[
\big|
\big\langle d_p^{-n_1} (F^{n_1/2})_* (S), \Psi\mathbb T_+\big\rangle
\big|
\lesssim (d_p / \delta')^{-n_1/2}.
\]
Let now $S_\epsilon$ be a regularization of $[\Delta]-\mathbb T_-$, with $S_\epsilon \to [\Delta]-\mathbb T_-$ as $\epsilon\to 0$
and $\|S_\epsilon\|_*$ bounded \cite{DS04}.
We can also add to $S_{\epsilon}$ a small smooth form so that
$\{S_\epsilon \} = \{ [\Delta]-\mathbb T_-\}$.
We apply the above inequality with $S_\epsilon$ instead of $S$ and take $\epsilon\to 0$. It follows 
 from Proposition \ref{p:super-pot-1}(iv)
 and  Lemma \ref{l:properties-R}
that
\begin{equation}\label{e:from-prop:cm}
\big|
\big\langle d_p^{-n_1} (F^{n_1/2})_* ([\Delta]-\mathbb T_-), \Psi\mathbb T_+\big\rangle
\big|
\lesssim (d_p / \delta')^{-n_1/2}.
\end{equation}
Combining  \eqref{e:for-mixing}
and \eqref{e:from-prop:cm}
and using the invariance of $\mathbb T_-$
we obtain that
\[
\big|\big\langle
\mu, 
(g_0 \circ f^{-n_1/2})( h \circ f^{n_1/2})
\big\rangle
- 
\langle\mathbb T_-, \Psi \mathbb T_+\rangle \big| \lesssim (d_p/\delta')^{-n}.
\]
To conclude, it is enough to show that
$\langle\mathbb T_-, \Psi \mathbb T_+\rangle=
0$. As
$\langle \mu, g_0\rangle=0$,
we have
\[
\langle \mathbb T_-, \Psi \mathbb T_+ \rangle
=
\langle
T_-\otimes T_+, g_0(w) h (z) T_+\otimes T_-\rangle
=
\langle\mu, g_0\rangle \cdot \langle \mu, h \rangle=0.
\]
The proof is complete.
\end{proof}

\begin{remark}\label{r:optimal-rate}
The rate of mixing in Theorem \ref{t:intro-mixing}
can be improved by considering $F' := (f^l,f^{-m})$ 
instead of $F$, for suitable positive integers $l$ and $m$,
see for instance
 \cite[Remark 4.1]{DS10}.
\end{remark}

\end{document}